\documentclass[11pt]{article} 
\usepackage[utf8]{inputenc} 

\usepackage[margin=1in]{geometry} 
\geometry{letterpaper} 
\usepackage{graphicx} 

\usepackage{booktabs} 
\usepackage{array} 
\usepackage{paralist} 
\usepackage{verbatim} 
\usepackage{mathrsfs}
\usepackage{amssymb}
\usepackage{amsthm}
\usepackage{amsmath,amsfonts,amssymb}
\usepackage{esint}
\usepackage{graphics}
\usepackage{enumerate}
\usepackage{mathtools}
\usepackage{xfrac}
\usepackage{subcaption}
\usepackage{stmaryrd}
 \usepackage{mathabx}

\usepackage[usenames,dvipsnames]{xcolor}
\usepackage[colorlinks=true, pdfstartview=FitV, linkcolor=blue, citecolor=blue, urlcolor=blue]{hyperref}
\usepackage[normalem]{ulem}

\usepackage{tikz}
\usetikzlibrary{calc}
\usepackage{pgf}
\usetikzlibrary{external}
\tikzexternalize 


\numberwithin{equation}{section}
\numberwithin{figure}{section}

\newtheorem{theorem}{Theorem}[section]

\newtheorem{corollary}[theorem]{Corollary}
\newtheorem{proposition}[theorem]{Proposition}
\newtheorem{lemma}[theorem]{Lemma}
\theoremstyle{definition}
\newtheorem{definition}[theorem]{Definition}

\newcommand*{\N}{\ensuremath{\mathbb{N}}}

\newcommand*{\Z}{\ensuremath{\mathbb{Z}}}

\newcommand*{\R}{\ensuremath{\mathbb{R}}}

\newcommand{\eps}{\varepsilon}

\renewcommand*{\tilde}{\widetilde}

\newcommand{\ep}{\eps}


\DeclareSymbolFont{boldoperators}{OT1}{cmr}{bx}{n}
\SetSymbolFont{boldoperators}{bold}{OT1}{cmr}{bx}{n}
\usepackage{accents}

\newcommand{\T}{\mathbb{T}}
\newcommand{\sol}{\mathcal{T}}



\def\XXint#1#2#3{{\setbox0=\hbox{$#1{#2#3}{\int}$}
\vcenter{\hbox{$#2#3$}}\kern-.5\wd0}}


\let\originalleft\left
\let\originalright\right
\renewcommand{\left}{\mathopen{}\mathclose\bgroup\originalleft}
\renewcommand{\right}{\aftergroup\egroup\originalright}



\newcommand{\C}{\mathbb{C}}

\newcommand{\E}{\mathbb{E}}

\renewcommand{\hat}{\widehat}


\makeatletter
\pgfmathdeclarefunction{erf}{1}{%
  \begingroup
    \pgfmathparse{#1 > 0 ? 1 : -1}%
    \edef\sign{\pgfmathresult}%
    \pgfmathparse{abs(#1)}%
    \edef\x{\pgfmathresult}%
    \pgfmathparse{1/(1+0.3275911*\x)}%
    \edef\t{\pgfmathresult}%
    \pgfmathparse{%
      1 - (((((1.061405429*\t -1.453152027)*\t) + 1.421413741)*\t 
      -0.284496736)*\t + 0.254829592)*\t*exp(-(\x*\x))}%
    \edef\y{\pgfmathresult}%
    \pgfmathparse{(\sign)*\y}%
    \pgfmath@smuggleone\pgfmathresult%
  \endgroup
}
\makeatother

\usepackage{titlesec}

\newcommand{\addperiod}[1]{#1.}
\titleformat{\section}
   {\centering\normalfont\Large}{\thesection.}{0.5em}{}
\titleformat*{\subsection}{\bfseries}
\titleformat{\subsubsection}[runin]
  {\normalfont\bfseries}
  {\thesubsubsection.}
  {0.5em}
  {\addperiod}
\titleformat*{\subsubsection}{\normalfont\itshape}
\titleformat*{\paragraph}{\bfseries}
\titleformat*{\subparagraph}{\large\bfseries}
\newcommand{\Q}{\mathbb{Q}}

\title{A subsequentially fast dynamo on $\T^3$}

\author{
Keefer Rowan\thanks{Courant Institute of Mathematical Sciences,  New York University.
{\footnotesize \href{mailto:keefer.rowan@cims.nyu.edu}{keefer.rowan@cims.nyu.edu}.}
}
}
\date{\today}

\usepackage[nottoc,notlot,notlof]{tocbibind}

\begin{document}

\maketitle

\begin{abstract}
    We construct a smooth velocity field $u$ on $\R_+ \times \T^3$ that exhibits kinematic dynamo action, causing exponential growth in solutions to the magnetohydrodynamic induction equation, with a rate that is uniform in diffusivity, for suitable sequences of diffusivity $\kappa_j \to 0.$ We call this a subsequentially fast dynamo, giving dynamo behavior intermediate between a truly slow dynamo and a truly fast dynamo.
\end{abstract}

\section{Introduction}

A fast dynamo is a divergence-free flow that causes exponential growth in solutions to the magnetohydrodynamic induction equation for the magnetic field uniformly for all sufficiently small magnetic diffusivities. The study of dynamos developed throughout the 20th century, seeking to explain how geophysical and astrophysical bodies can maintain magnetic fields---such as the Earth's magnetic field---on time scales well beyond the natural diffusive time scale associated to the magnetic diffusivity of the body. The beginnings of the explanation were first provided by~\cite{larmor_how_1919}, which provided a qualitative mechanistic argument that the motion of an electrically charged fluid---such as the Earth's molten core---can amplify the magnetic field, causing growth of the magnetic field energy despite the presence of diffusion.

Dynamo theory quickly became the accepted physical explanation for the presence of geophysical and astrophysical magnetic fields; see~\cite[Chapter 1]{childress_stretch_1995} for a historical sketch and extensive references. However, there remained a mathematical question at the heart of dynamo theory: can we exhibit a velocity field that demonstrable causes magnetic field growth (in the desired, physically correct, way). Despite considerable effort from a variety of direction, this question has largely eluded a clear resolution. Notably, a precise form of this question was given as one of Arnold's problems~\cite[Problem 1994-28]{arnold_arnolds_2005}. A discussion of the literature and of the primary difficulties of fully resolving the problem is given in Subsection~\ref{sub:background}. Let us first state precisely our setting and our result.

\begin{definition}
    For any vector field $u\in W^{1,\infty}(\R_+ \times \T^3)$ with $\nabla \cdot u = 0,$ any diffusivity $\kappa \geq 0$, and times $0 \leq s \leq t$, let $\sol^{u,\kappa}_{s,t}$ denote the solution operator for the induction equation associated to $u$ with diffusivity $\kappa$, that is $\sol^{u,\kappa}_{s,t} b(s) = b(t)$, where $b$ solves
    \begin{equation}
    \label{eq.induction-equation}
    \partial_t b - \kappa \Delta b + u \cdot \nabla b - b \cdot \nabla u =0.
    \end{equation}
\end{definition}

\eqref{eq.induction-equation} is a commonly used effective equation for the evolution of the magnetic field for a conducting fluid relevant for large objects such as planets and stars. Physically, the fluid motion $u$ is coupled to the magnetic field, but here we treat the purely \textit{kinematic problem}, in which we are free to specify the velocity field arbitrarily. We note however that the velocity fields of interest are divergence-free $\nabla \cdot u =0$. Also, the magnetic field---by Maxwell's laws---is required to be divergence-free, $\nabla \cdot b =0.$ See~\cite[Chapter 1.2]{childress_stretch_1995} for a discussion of the physical validity of this setting.

\begin{definition}
    For any vector field $u\in W^{1,\infty}(\R_+ \times \T^3)$ with $\nabla \cdot u = 0$ and any diffusivity $\kappa \geq 0$, define the \textit{dynamo rate} of the pair $(u,\kappa)$ as
    \[\gamma(u,\kappa) := \sup_{b \in L^2(\T^3), \nabla \cdot b =0} \limsup_{t \to \infty} \frac{1}{t} \log \|\sol^{u,\kappa}_{0,t} b\|_{L^2(\T^3)}^2.\]
\end{definition}

We note that the dynamo rate gives the \textit{fastest} asymptotic exponential growth rate for any (divergence-free) initial data.

\begin{definition}
    A flow $u$ is then said to be a \textit{fast dynamo} if
\[\liminf_{\kappa \to 0} \gamma(u,\kappa) >0.\]    
\end{definition}

A fast dynamo is then a velocity field in which the fastest exponential growth rate as uniformly bounded away from $0$ for all sufficiently small initial data. It is the construction of a fast dynamo which is the primary goal of mathematical dynamo theory.

In this work, we do not succeed in building a full fast dynamo but instead build the weaker \textit{subsequentially fast dynamo}, which we define as follows.

\begin{definition}
    We say a flow  $u\in W^{1,\infty}(\R_+ \times \T^3)$ with $\nabla \cdot u = 0$ is a \textit{subsequentially fast dynamo} if
    \[\limsup_{\kappa \to 0} \gamma(u,\kappa) >0.\]
\end{definition}

We note the only difference is the exchange of a $\liminf$ and a $\limsup$. We call it a subsequentially fast dynamo because we get fast dynamo behavior along appropriate subsequences $\kappa_j \to 0$. What we prove is somewhat stronger than the existence of a subsequentially fast dynamo; the precise result is the following.

\begin{theorem}
    \label{thm.main-thm}
    There exists some $\kappa_0 >0$ such that for any countable collection of diffusivity $(\kappa_j)_{j=1}^\infty \subseteq [0,\kappa_0]$, there exists a flow $u : [0,\infty) \times \T^3 \to \R^3$ with $\nabla \cdot u =0$ that has uniform dynamo action for each diffusivity $\kappa_j$. 

    Specifically, there is an initial data $b_0 := \sin(x) \mathrm{e}_z$ such that if $b^{\kappa}(t,x)$ solves
    \[\begin{cases}
        \partial_t b^\kappa - \kappa \Delta b^\kappa + u \cdot \nabla b^\kappa - b^\kappa \cdot \nabla u = 0\\
        b^\kappa(0,\cdot) = b_0(\cdot),
    \end{cases}\]
    then for any $j \in \N$,
    \[\limsup_{t \to \infty}\max_{|k| =1} \frac{1}{t} \log |\hat b^{\kappa_j}(t,k)|^2 \geq \tfrac{1}{4}.\]
    Hence in particular
    \[\gamma(u,\kappa_j) \geq \limsup_{t \to \infty}\max_{|k| =1} \frac{1}{t} \log \| b^{\kappa_j}(t)\|_{L^2(\T^3)}^2 \geq \tfrac{1}{4}.\]
    Further the flow obeys uniform estimates
    \begin{equation}
    \label{eq.unif-estimates}
    \|\partial_t^\ell \partial_x^\alpha u\|_{L^\infty_{t,x}} \leq C(\ell,\alpha),\end{equation}
    where the constant doesn't depend on the sequence $\kappa_j$.
\end{theorem}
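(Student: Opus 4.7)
The plan is to construct $u$ as a concatenation in time of building blocks, each designed to amplify the relevant Fourier content for a single diffusivity drawn from the given sequence, arranged via a diagonal enumeration so that each $\kappa_j$ is revisited infinitely often with rapidly growing block lengths. This yields the required $\limsup$ simultaneously for every $j$, with the uniform derivative estimate on $u$ inherited from the underlying building block.

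\textbf{Step 1 (amplification lemma).} The core input is a result of the following form: there exist $\kappa_0 > 0$ and a family of smooth divergence-free velocity fields $\{v^{\kappa,T}\}$, indexed by $\kappa \in (0, \kappa_0]$ and by $T$ in an unbounded set depending on $\kappa$, obeying uniform derivative estimates $\|\partial_t^\ell \partial_x^\alpha v^{\kappa,T}\|_{L^\infty} \leq C(\ell,\alpha)$, such that the induction equation on $[0,T] \times \T^3$ driven by $v^{\kappa,T}$ with diffusivity $\kappa$ amplifies the $|k|=1$ Fourier content of the solution by a factor of at least $e^{T/8}$. The natural candidate is an alternating-shear construction, time-periodic with period modulated to target the diffusive cutoff scale $\sqrt{\kappa T}$; the exponent $1/8$ would arise from a worst-case balance between deterministic stretching of the lowest Fourier modes and the diffusive destruction of the high-$k$ content created in the process.

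\textbf{Step 2 (enumeration and assembly).} Fix a surjection $\sigma:\N\to\N$ in which each integer appears infinitely often. Partition $[0,\infty)$ into successive intervals $I_n = [T_{n-1}, T_n]$ of lengths $L_n := T_n - T_{n-1}$, chosen so that $L_n \to \infty$ and $L_n / T_{n-1} \to \infty$. On $I_n$, define $u(t,\cdot) := v^{\kappa_{\sigma(n)}, L_n}(t - T_{n-1}, \cdot)$, with short smooth matching at the endpoints to ensure $u$ is globally $C^\infty$. The uniform derivative bounds on $u$ are then inherited from Step 1 and depend only on the constants $C(\ell,\alpha)$, not on the sequence $(\kappa_j)$.

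\textbf{Step 3 (verification of the limsup).} Fix $j \in \N$ and let $\{n_m\}_m$ enumerate $\{n : \sigma(n) = j\}$. By the amplification lemma applied on the block $I_{n_m}$ to the solution $b^{\kappa_j}$, the quantity $\max_{|k|=1}|\hat b^{\kappa_j}(\cdot, k)|$ grows by a factor of at least $e^{L_{n_m}/8}$ across that block. Outside these favored blocks, the elementary energy estimate $\|b^{\kappa_j}(t)\|_{L^2} \leq \|b^{\kappa_j}(s)\|_{L^2}\, e^{C(t-s)}$, with $C$ depending only on $\|\nabla u\|_{L^\infty}$, controls the worst-case loss. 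Combining, for some $|k|=1$,
\[
\log|\hat b^{\kappa_j}(T_{n_m}, k)|^2 \;\geq\; \tfrac{1}{4} L_{n_m} - 2C\, T_{n_m - 1} - O(1),
\]
and the choice $L_{n_m}/T_{n_m - 1} \to \infty$ yields $\tfrac{1}{T_{n_m}} \log|\hat b^{\kappa_j}(T_{n_m}, k)|^2 \to \tfrac{1}{4}$ along the subsequence $t = T_{n_m}$, which is the required bound.

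\textbf{Main obstacle.} The heart of the argument is the amplification lemma of Step 1. The requirement that derivative bounds on $v^{\kappa,T}$ be independent of $\kappa$ rules out the naive approach of rescaling a fixed flow to smaller spatial scales; one must instead produce a fixed-scale velocity field that nevertheless achieves robust exponential amplification at diffusivity $\kappa$. This is essentially the full fast-dynamo problem in microcosm, tractable here only because we may choose $T$ as a function of $\kappa$, escaping the uniform-in-$\kappa$ requirement of a true fast dynamo. A further subtlety is that the lemma must apply to whatever state $b^{\kappa_j}$ has reached at the start of each favored block, not just to $b_0 = \sin(x)\mathbf{e}_z$; this rules out flows that merely amplify generic data and instead requires a projection-like mechanism guaranteeing that some $|k|=1$ content is always regenerated from the incoming $L^2$ mass. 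I expect the construction to rely on explicit Fourier analysis of a time-periodic shear flow, combined with a perturbative treatment of diffusion on the chosen timescale, together with a robustness argument controlling the interaction between distinct blocks.
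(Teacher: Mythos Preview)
Your enumeration-and-amplification scheme matches the paper's outline. The gap is in Step~3, and it is not a technicality.

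The displayed inequality $\log|\hat b^{\kappa_j}(T_{n_m}, k)|^2 \geq \tfrac{1}{4} L_{n_m} - 2C\, T_{n_m-1} - O(1)$ does not follow from what you state. The energy inequality you quote is an \emph{upper} bound on $\|b\|_{L^2}$; to lower-bound $\|b\|_{L^2}$ one must also control the dissipation term $-\kappa\|\nabla b\|_{L^2}^2$, and the resulting bound (which the paper obtains by tracking the projective quantity $\|\nabla b\|_{L^2}^2/\|b\|_{L^2}^2$) is only doubly exponential: $\|b(t)\|_{L^2}$ can shrink like $\exp(-e^{Ct})$. No condition of the form $L_n/T_{n-1}\to\infty$ absorbs such a loss. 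Worse, an $L^2$ lower bound says nothing about $\max_{|k|=1}|\hat b(\cdot,k)|$, which can vanish while $\|b\|_{L^2}$ stays large---you correctly flag this in your ``Main obstacle'' but do not resolve it.

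The paper avoids both problems by making the block lengths fully \emph{adaptive} rather than fixed in advance. At the $n$th step it (i) uses the doubly-exponential unique-continuation bound only qualitatively, to certify $b^{\tilde\kappa_n}\ne 0$ at the block start; (ii) applies a short transitivity flow, chosen depending on the current Fourier support of $b^{\tilde\kappa_n}$, to move some nonzero mass onto a mode with $|k|=1$; and then (iii) runs the amplification flow---also state-dependent, obtained as a translation of a fixed shear building block selected via a translation-averaging identity that diagonalizes the mean evolution in Fourier---for however long it takes until $\tfrac{1}{t}\log|\hat b^{\tilde\kappa_n}(t,k)|^2\geq \tfrac{1}{4}$ actually holds. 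Because this is a stopping rule, no quantitative decay rate is ever needed; nonvanishing suffices. Your pre-fixed-$L_n$ scheme cannot be salvaged without either a single-exponential lower bound on the low-mode Fourier mass (which is false in general) or switching to this adaptive mechanism.
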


Taking $\{\kappa_j\} = \Q \cap [0,\kappa_0]$, we get a subsequentially fast dynamo. In fact, we get a dynamo that cause growth with some uniformly positive exponential rate on a dense set of diffusivities in an interval about $0$.

\begin{corollary}
    There exists a subsequentially fast dynamo $u \in C^\infty(\R_+ \times \T^3)$.
\end{corollary}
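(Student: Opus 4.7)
The plan is to apply Theorem~\ref{thm.main-thm} directly with the specific countable collection $\{\kappa_j\}_{j=1}^\infty := \Q \cap [0,\kappa_0]$, exactly as suggested by the sentence preceding the corollary. Since $\Q \cap [0,\kappa_0]$ is countable, the theorem produces a divergence-free velocity field $u : [0,\infty) \times \T^3 \to \R^3$ satisfying $\gamma(u,\kappa_j) \geq \tfrac{1}{4}$ for every rational $\kappa_j \in [0,\kappa_0]$, together with the uniform derivative bounds \eqref{eq.unif-estimates}.

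To conclude that this single $u$ is a subsequentially fast dynamo, I need only verify that $\limsup_{\kappa \to 0} \gamma(u,\kappa) > 0$. Since $\Q \cap (0,\kappa_0]$ accumulates at $0$, I can choose a decreasing sequence of rationals $\kappa_{j_n} \to 0$ in $(0,\kappa_0]$. For each such $\kappa_{j_n}$, Theorem~\ref{thm.main-thm} applies and gives $\gamma(u,\kappa_{j_n}) \geq \tfrac{1}{4}$, so
\[
\limsup_{\kappa \to 0} \gamma(u,\kappa) \;\geq\; \limsup_{n \to \infty} \gamma(u,\kappa_{j_n}) \;\geq\; \tfrac{1}{4} \;>\; 0,
\]
which is precisely the subsequentially fast dynamo property as defined above.

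For the $C^\infty$ regularity, I invoke \eqref{eq.unif-estimates}: for every $\ell \in \N$ and every multi-index $\alpha$, the derivative $\partial_t^\ell \partial_x^\alpha u$ exists in $L^\infty_{t,x}$ with a finite bound $C(\ell,\alpha)$. Since this holds for all orders, $u$ is smooth on $\R_+ \times \T^3$ in the classical sense.

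There is frankly no mathematical obstacle to overcome in this corollary; all the analytical difficulty is packed into Theorem~\ref{thm.main-thm}, which must construct a single smooth flow that dynamos simultaneously at a prescribed dense countable set of diffusivities. The role of the corollary is just to observe that density of $\Q$ in $[0,\kappa_0]$ converts the ``countable collection'' conclusion of the theorem into a genuine $\limsup_{\kappa \to 0}$ statement, and that the uniform estimates on the flow upgrade the regularity $W^{1,\infty}$ in the definition to $C^\infty$.
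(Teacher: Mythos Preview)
Your proposal is correct and follows exactly the approach the paper indicates: apply Theorem~\ref{thm.main-thm} with $\{\kappa_j\} = \Q \cap [0,\kappa_0]$, use density at $0$ to get the $\limsup$, and read off $C^\infty$ regularity from the uniform derivative bounds~\eqref{eq.unif-estimates}. The paper itself gives no separate proof beyond the sentence preceding the corollary, so there is nothing to compare.
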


\subsection{Background}
\label{sub:background}

The problem we study here is most properly called the \textit{kinematic dynamo problem}, as we take the velocity field $u$ as fixed and consider only the kinematic evolution of the magnetic field. There is also the substantially more difficult \textit{magnetohydrodynamic dynamo problem}; for example one can take a randomly stirred fluid at fixed positive viscosity coupled to the magnetic field evolution through the magnetohydrodynamics equations and ask about presence (or absence) of macroscopic magnetic fields at long times. For a discussion of some aspects of the dynamical problem, see~\cite[Chapter 12]{childress_stretch_1995}. There are also some rigorous mathematical works touching on this subject~\cite{friedlander_dynamo_1991,gerard-varet_oscillating_2005,gerard-varet_shear_2007}, though we are far from a resolution to the dynamo problem in this setting.

Despite the relative simplicity of the kinematic dynamo problem, which reduces to the analysis of purely linear PDE, there are still myriad substantial difficulties. The first is the presence of obstructions to dynamo action in lower dimensional geometries, known as the ``antidynamo theorems''. The Zeldovich antidynamo theorem~\cite{zeldovich_magnetic_1956} states that no planar flow $u(x,y,z) = (u_x(x,y,z), u_y(x,y,z),0)$ can be a dynamo ($\gamma(u,\kappa)  \leq 0$ for all $\kappa > 0$) and the Cowling antidynamo theorem~\cite{cowling_magnetic_1933} states that no axisymmetric flow can cause exponential growth of any axisymmetric initial data. These theorems prevent us from studying a purely two-dimensional problem and force us to work in the more spacious three dimensions. 

The simple existence of any kinematic dynamo, that is a pair $(u,\kappa)$ where $\gamma(u,\kappa)>0$, is not a trivial exercise. When $\kappa=0$, the problem becomes substantially simpler and essentially reduces to proving the positivity of the top Lyapunov exponent for Lagrangian particle trajectories in the advecting flow $u$; this connection has been clearly elucidated in~\cite{zelati_three-dimensional_2024} in which the authors prove $\gamma(u,0) > 0$ for a randomized, time-dependent version of the ABC flow. For $\kappa=0$, the Zeldovich antidynamo theorem no longer applies and two-dimensional dynamo action is now possible: in fact it's quite common. In particular, if $\phi$ solves the transport equation
\[\partial_t \phi + u \cdot\nabla \phi =0,\]
then $b:= \nabla^\perp \phi$ solves the induction equation~\eqref{eq.induction-equation} with $\kappa=0.$ Note then that by interpolation
\[\|b_t\|_{L^2} = \|\phi_t\|_{H^1} \geq \|\phi_t\|_{L^2}^2 \|\phi_t\|_{H^{-1}}^{-1} =\|\phi_0\|_{L^2}^2 \|\phi_t\|_{H^{-1}}^{-1}.\]
Thus if $u$ is exponentially mixing, so that $\|\phi_t\|_{H^{-1}}$ decays exponentially quickly, we much have that $\|b_t\|_{L^2}$ grows exponentially quickly, that is $\gamma(u,0) >0$. As such, the substantial literature on expoenentially mixing flows~\cite{bedrossian_lagrangian_2022, bedrossian_almost-sure_2022, blumenthal_exponential_2023,cooperman_exponential_2024, navarro-fernandez_exponential_2025} provide a wealth of examples of $\kappa=0$ dynamos.

For $\kappa>0$, the problem becomes more complex. However, perturbative arguments allow one to show that not only do kinematic dynamos exist, they are in some sense generic~\cite{roberts_spatially_1970} (see~\cite{zelati_alpha-unstable_2025} for a more modern and precise treatment of the perturbative argument). After having established the existence of dynamos for $\kappa=0$ and $\kappa>0$, it is natural to turn our attention to the getting the correct behavior of rates for small $\kappa$; the proper behavior as $\kappa \to 0$ is essential to physics of astrophysical and geophysical magnetic fields, in which one should imagine that there is an extremely small, but importantly non-zero, magnetic diffusivity. Vainshtein and Zeldovich introduced the distinction between slow and fast dynamos~\cite{vainshtein_origin_1972}. As stated above, a flow $u$ is said to be a fast dynamo if
\[\liminf_{\kappa\to0} \gamma(u,\kappa) >0.\]
Generally, all flows that are not fast dynamos are called slow dynamos, but the flows that are often held up as examples of slow dynamos are flow for which
\[\limsup_{\kappa \to 0} \gamma(u,\kappa) \leq 0.\]
Thus, in the conventional usage, the subsequentially fast dynamo we construct here may be called a slow dynamo, but it is substantially closer to a fast dynamo than the typical slow dynamo.

The difficulty in constructing a fast dynamo lies in the $\kappa \to 0$ limit. The first thing to note is that the limit is very singular. In particular, since $\gamma(u,\kappa)$ is defined with a $t \to \infty$ limit, there is no reason to generically hope that $\gamma(u,\kappa)$ is continuous as $\kappa \to 0.$ In fact, in two-dimensions, we know by the above discussion that for any exponentially mixing flow $\gamma(u,\kappa) >0$ for $\kappa = 0$ and $\gamma(u,\kappa) \leq 0$ for $\kappa> 0$; thus $\gamma$ has a jump discontinuity as $\kappa=0$.

Speaking very broadly, we have two rather different approaches to understanding dynamo action for $\kappa=0$ and for $\kappa$ macroscopic. When $\kappa=0$, there is no ``small-scale cutoff'' and so dynamo action can be sustained by the development of progressively smaller filaments of magnetic field lines. This is what happens in the case of $\kappa=0$ dynamos in 2D generated by exponentially mixing flows: we have that $\|b_t\|_{L^2}$ grows exponentially, but exponential mixing also guarantees that $\|\Pi_{\leq N} b_t\|_{L^2} \to 0$ for any fixed $N$ where $\Pi_{\leq N} f$ is the projection of $f$ onto all Fourier modes with wavenumber at most $N$. That is, the dynamo action is happening \textit{exclusively on small scales.}

On the other hand, when $\kappa$ is macroscopic, we can hope to essentially perturb away from the Laplacian. While the Laplacian is purely diffusive--and so causes no growth---it is a very well behaved operator with discrete, not overly degenerate spectrum. Thus the perturbative arguments for dynamo action when $\kappa>0$ actually are using the dissipation of the Laplacian to prove the dynamo growth. Interpolating between these two arguments---and uniformly controlling the growth rate in the interpolation---is where the essentially difficulty lies.

The setting of primary physical interest for dynamo action is that of a compact spatial domain, or at least of a compactly support velocity field: the Earth's molten core takes up only a finite region of space. This compactness forces us to deal with the issues posed by taking the $\kappa \to 0$ limit. That is because necessarily, in a compact domain, dynamo action can only be supported by the folding of magnetic field lines on themselves (this is the origin of the ``stretch twist fold'' motto, and is well elucidated in~\cite[Chapters 1-2]{childress_stretch_1995}). This folding necessarily creates small scales. As such, as we decrease $\kappa$, the magnetic field $b_t$ will become ever more complex, involving ever smaller scales. To build a fast dynamo, one would have to be able to understand the long-time dynamics of this increasingly complex object, uniformly in $\kappa$.

This issue of small scale production and folding can be completely elided, even on the compact domain of the torus, by considering dynamo action as induced by maps, such as Arnold's cat map. Since this avoids the main difficulty of this problem and is largely unphysical, we won't discuss it further (see however~\cite[Chapter 3]{childress_stretch_1995} for a thorough treatment). 

The setting of $\R^3$ is one of intermediate difficulty, in which the fast dynamo problem remains highly difficult yet some of the issues present in the compact setting can be sidestepped. The impressive recent work~\cite{zelati_alpha-unstable_2025} provides the construction of a fast dynamo on $\R^3$, positively resolving the fast dynamo problem in that setting. Their construction is of an autonomous vector field $u \in W^{1,\infty}(\R^3)$ such that for each diffusivity $\kappa \in (0,1)$, there exists a data $b_0$ such that if $b_t$ is the solution to~\eqref{eq.induction-equation}, then $\|b_t\|_{L^2(\R^3)} \geq C^{-1} e^{C^{-1} t} \|b_0\|_{L^2(\R^3)}$ for some universal constant $C.$ As such, their construction has the very nice property that the $\limsup$ in the definition of $\gamma(u,\kappa)$ could be replaced with a $\liminf$ in their setting. It is however still an interesting open question as to whether one can construct a \textit{smooth} fast dynamo on $\R^3$, e.g. where $u \in L^\infty_t C^k_x$ for all $k \in \N$. 

Let us also note for completeness that there are two other known examples of fast dynamos on $\R^3$, one given by linear advecting flows $u(x) = A x$~\cite{zeldovich_kinematic_1984} and the other the \textit{Ponomarenko dynamo}~\cite{gilbert_fast_1988}. However, the former is unbounded and the latter discontinuous, so neither is spatially Lipschitz, which is (seemingly) a natural minimal requirement on the velocity field.

Finally, the dynamo problem for a random velocity field---a particularly relevant example being the velocity field given by solving a randomly forced fluid equation---is an example of the more general problem in giving lower bounds for the top Lyapunov exponent for an infinite dimensional linear cocycle. Some aspects of this problem are studied in~\cite{hairer_lower_2024}. While the results of that work are far from establishing a positive top Lyapunov exponent (in their settings they lower bound the top Lyapunov exponent by $-\kappa^{-\alpha}$ for some $\alpha>0$, which goes to $-\infty$ as $\kappa \to 0$), it seems that the ideas present in that work may be useful in further investigations into fast dynamo action.

\subsection{Overview of the argument}

\label{sub:overview}

Our construction provides the first example of a velocity field on $\T^3$ that exhibits dynamo action in arbitrary neighborhoods of $\kappa =0$, that is the first subsequentially fast dynamo on $\T^3$. It also has the advantage being smooth and following a relatively simple construction and analysis.

The construction has the spirit of a control problem, in which we inspect the current state of the magnetic field $b_t$ in order to choose the velocity field $u$ on the next time interval $[t,t+2].$ Propositions~\ref{prop.exists-growing} and~\ref{prop.is-transitive} are the results that give us a rich enough class of controls. What they allow is for any $\kappa \in [0,\kappa_0]$ and initial data $b_0$ such that $b_0 \ne 0, \nabla \cdot b_0 = 0,$ and $\int b_0(x)\,dx =0$, we can construct a velocity field $u$ (which obeys uniform regularity estimates as in~\eqref{eq.unif-estimates}) such that if $b_t$ solves~\eqref{eq.induction-equation}, we have that $\|b_t\|_{L^2}$ grows at a uniform exponential rate $\gamma$.

That control argument only \textit{a priori} gives us exponential growth at \textit{a single diffusivity}. We then need to upgrade to exponential growth along our sequence of diffusivities $\kappa_j$. The idea is to simply cause growth on each diffusivity one after the other, visiting each diffusivity infinitely many times. That is, cause the solution associated to the diffusivity $\kappa_1$ to grow for a bit of time $[0,t_1]$, then look at what the solution for $\kappa_2$ looks like at time $t_1$, use that to construct the vector field $u$ on $[t_1,t_2]$ so that the solution associated to $\kappa_2$, $b^{\kappa_2}_t$,is growing at the uniform exponential rate. We then wait until $\frac{1}{t} \log \|b^{\kappa_2}_t\|_{L^2} \geq \frac{1}{2} \gamma$. Since we know we can make $b^{\kappa_2}_t$ grow with exponential rate $\gamma$, no matter how small $b^{\kappa_2}_{t_1}$ is, we know it will take some finite amount of time until $\frac{1}{t} \log \|b^{\kappa_2}_t\|_{L^2} \geq \frac{1}{2} \gamma.$ We call that time $t_2.$ Then we turn our attention back to $\kappa_1$. It may have shrunk on $[t_1,t_2]$, but using the same idea, we can make it some that on the time intreval $[t_2,t_3]$ we have growth so that for the solution associated to $\kappa^1$, $b^{\kappa_1}_t$, we have that $\frac{1}{t_3} \log \|b^{\kappa_1}_{t_3}\|_{L^2} \geq \frac{1}{2} \gamma.$ Proceeding in this way, we cause growth for $\kappa_2, \kappa_3, \kappa_1,\kappa_2,...$. In this way, for each diffusivity $\kappa_j$ we will have that for the associated solution $b^{\kappa_j}_t$, $\frac{1}{t} \log \|b^{\kappa_j}_t\|_{L^2} \geq \frac{1}{2} \gamma$ infinitely often; in particular we get that $\gamma(u,\kappa_j) \geq \frac{1}{2} \gamma$. 

We note that this construction only has $\|b^{\kappa_j}_t\|_{L^2} \approx e^{\gamma t/2}$ on a lacunary sequence of times, in contrast to the construction of~\cite{zelati_alpha-unstable_2025}. A consequence of this is that there is very little hope in getting growth for $\kappa \in [0,\kappa_0]$ such that $\kappa \ne \kappa_j$ by a simple approximation argument using density of the $\kappa_j$ in $[0,\kappa_0]$. It seems unlikely that the constructed velocity field could ever be proven to be a fast dynamo. As such, constructing a true fast dynamo would require a fairly different approach.

In the above sketch of the argument, we said we can cause exponential growth of the magnetic field solution associated to the diffusivity \textit{no matter how small}. This is only true if the solution is non-zero: once the solution becomes identically zero, it can never become anything other than zero. Fortunately, unique continuation results prevent the solution to the induction equation~\eqref{eq.induction-equation} from becoming $0$ in finite time. The case of the scalar advection diffusion equation is much better studied; in that case one can show unique continuation under very mild assumptions~\cite{poon_unique_1996}. We give a unique continuation result in Proposition~\ref{prop.doesnt-vanish} that suffices for our purposes. Proposition~\ref{prop.doesnt-vanish} is effectively a version of the well-known double exponential lower bound on the advection diffusion equation, e.g.\ as shown in~\cite[2.3.1]{miles_diffusion-limited_2018}.

Let us finally remark on how we prove the ``controllability'' results of Propositions~\ref{prop.exists-growing} and~\ref{prop.is-transitive}. Proposition~\ref{prop.is-transitive} is essentially a technical result that says that if $b_0 \ne 0$, then we can choose a flow $u$ such that at some future time $t$, $b_t$ has a nonzero mass on a Fourier mode of unit magnitude. This result follows essentially by a direct computation of derivatives, using the Fourier transform of the equation, allowing us to choose a velocity field that directly ``couples'' the mode of $b_0$ that has initial Fourier mass and some mode of unit magnitude that we want to put Fourier mass on.

Proposition~\ref{prop.exists-growing} is somewhat more interesting. The basic idea is given in Corollary~\ref{cor.averaged-dynamics}, which gives that the evolution---averaged over uniform translations of the advecting flow---is diagonal in Fourier space. That is, for any flow $u$, if $b^y_1 := \sol^{\tau_y u,\kappa}_{0,1} b_0$, then 
\begin{equation}
\label{eq.integrated-evolution-intro}
    \int \hat b^y_1(k)\,dy = A^k \hat b_0(k)
\end{equation}
for some explicitly computable matrix $A^k$. In order to prove Proposition~\ref{prop.exists-growing}, we have by hypothesis that for some $|k| =1$, that $\hat b_0(k) \ne 0$, and so we show that we can construct some flow $u \in C_c^\infty((0,2) \times \T^3)$ such that the associated matrix $A_k$, we have that $|A_k^n \hat b_0(k)| \geq e^n$ for $n$ sufficiently large. This then allows us to choose a sequence of translations of $u$ that we can glue together to make an advecting flow that causes $\hat b_n(k)$ to grow exponentially. The advecting flows we construct are similar those considered in~\cite[Chapter 11.4]{childress_stretch_1995}.

This identity~\eqref{eq.integrated-evolution-intro} is clearest from a slightly different perspective: that of random \textit{renewing flows}. We consider our advecting flow $u$ to be random, taken to be iid on unit time intervals $[n,n+1]$, and on each unit time interval $u(t,x) = u_0(t-n, x+Y_n)$ where $Y_n$ is an iid uniform translation. Then if we consider the (random) magnetic field given as $b_n(x) := \sol^{u,\kappa}_{0,n} b_0(x)$, the relation~\eqref{eq.integrated-evolution-intro} gives that the one-point correlation function of $b_n(x)$ has a particular simple evolution in Fourier space: $\E \hat b_{n+1}(k) = A_k \E \hat b_n(k)$. This fact has been well exploited in the physics and applied mathematics literatures, see~\cite[Chapter 11.1, 11.4]{childress_stretch_1995} for historical references. Renewing flows as particularly tractable random models of advecting flows has seen extensive usage in the passive scalar chaos literature---motivated by the mixing example of Pierrehumbert~\cite{pierrehumbert_tracer_1994}---in which they can be shown to almost surely have positive top Lyapunov exponents, exponentially mix passive scalars, and exponentially enhance passive scalar dissipation~\cite{blumenthal_exponential_2023,cooperman_harris_2024}. One can also view transport noise (also referred to as the Kraichnan model in the fluid mechanics setting, following~\cite{kraichnanSmallScaleStructure1968}) as a limiting case of renewing flows under taking the ``renewing time'' to $0$ and properly scaling the magnitude of the flow. In the study of dynamos, transport noise is often called the Kraichnan-Kazantsev model following~\cite{kraichnan_convection_1974,kazantsev_enhancement_1968}. 

In the mathematical study of transport noise dynamics---as well as in the passive scalar chaos work on renewing flows---extensive use of the two-point correlation function (in our setting this would be $\E b(t,x) \otimes b(t,y)$) is made. The two-point correlation function also has somewhat simple dynamics, but it is infinitely more complicated than a simple matrix multiplication on $\C^3$. However, the works using the two-point correlation function---such as to prove exponential mixing~\cite{gess_stabilization_2021,luo_elementary_2024}, anomalous dissipation~\cite{jan_integration_2002,jan_flows_2004,rowan_anomalous_2024}, or anomalous regularization~\cite{coghi_existence_2024,galeati_anomalous_2024,bagnara_anomalous_2024}---prove some object (namely an $H^s$ norm of the solution for some $s \in (-1/2,1/2)$ depending on the problem) is suitably \textit{small}. As such they need a coercive quantity that they are taking an expectation of: if the expected $H^s$ norm is small, then the $H^s$ norm has to be typically small. The one point correlation function is never coercive: it can vanish identically despite having highly nontrivial dynamics of the underlying field $b_n(x)$. However, we just want to construct a solution where $\|b_n\|_{L^2_x}$ is large, for which it suffices to have that $\E \hat b_n(k)$ is large: if the expectation is large, then the random variable must be large at least on some event. Thus in this setting, it suffices to work with the much simpler but non-coercive one-point correlation function.

\subsection{Acknowledgments}

The author was partially supported by the NSF Collaborative Research Grant DMS-2307681 and the Simons Foundation through the Simons Investigators program.

\section{Proof of Theorem~\ref{thm.main-thm}}

Let us first introduce some notation for the natural space that our magnetic field $b$ takes values in. We note that the solution operator is a well-defined map $\sol^{u,\kappa}_{s,t} : L^2_{\mathrm{sol},0}(\T^3) \to L^2_{\mathrm{sol},0}(\T^3),$ that is it takes divergence-free, mean-zero magnetic fields to divergence-free, mean-zero magnetic fields.

\begin{definition}
    Denote
    \[L^2_{\mathrm{sol},0}(\T^3) := \{ b \in (L^2(\T^3))^3 : \nabla \cdot b =0, \int_{\T^3} b(x)\,dx =0\}.\]
\end{definition}

Our first proposition is the main proposition we use to control the magnetic field and cause growth. It gives that, provided there is some initial mass of $b_0$ on the Fourier modes of unit magnitude, then we can choose the velocity field $u$ to cause exponential growth at the desired rate while obeying uniform estimates.

\begin{proposition}
    \label{prop.exists-growing}
    There exists some $\kappa_0 >0$ such that for any $\kappa \in [0,\kappa_0]$, and any $b_0 \in L^2_{\mathrm{sol},0}(\T^3)$ such that $\hat b_0(k) \ne 0$ for some $k \in \Z^3$ with $|k| =1$, there exists a flow $u : [0,\infty) \times \T^3 \to \R^3$ with $\nabla \cdot u =0$ and a time $2n \in 2\N$ such that,
    \[|\widehat{\sol_{0,2n}^{u,\kappa} b_0}(k)| \geq e^{n}.\]
    Further, $u$ can be taken so that for any $\ell,\alpha$,
    \[\|\partial_t^\ell \partial_x^\alpha u\|_{L^\infty_{t,x}} \leq C(\ell,\alpha),\]
    where the constant doesn't depend on $b_0$ or $\kappa$. Additionally, we can take $u$ so that $u \in C_c^\infty((0,2n) \times \T^3).$
\end{proposition}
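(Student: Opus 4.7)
The strategy follows the outline in Subsection~\ref{sub:overview}: reduce the problem to a finite-dimensional matrix amplification via spatial translation averaging, construct a single ``period'' flow $u_0 \in C_c^\infty((0,2) \times \T^3)$ realizing the amplification, and then glue translated copies together. Fix the mode $k$ with $|k|=1$ and $\hat b_0(k) \neq 0$ from the hypothesis. By Corollary~\ref{cor.averaged-dynamics}, any candidate $u_0$ and diffusivity $\kappa$ produce a linear map $A^k$ on the two-dimensional subspace $k^\perp \subset \C^3$ (the space where divergence-free Fourier data at wavenumber $k$ lives) satisfying
\[ \int_{\T^3} \widehat{\sol^{\tau_y u_0,\kappa}_{0,2} b}(k)\, dy = A^k\, \hat b(k). \]
Now glue $n$ translated copies by setting $u(t,x) := (\tau_{y_j} u_0)(t - 2(j-1), x)$ for $t \in [2(j-1), 2j]$. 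Treating $Y_1, \dots, Y_n$ as iid uniform on $\T^3$ and iterating the identity by conditional expectations---crucially using that the averaged one-step evolution of $\hat b(k)$ depends only on $\hat b(k)$ and not on other Fourier modes---yields
\[ \E\bigl[ \widehat{\sol^{u,\kappa}_{0,2n} b_0}(k) \bigr] = (A^k)^n \hat b_0(k). \]
Since $|\E Z| \leq \E |Z|$, any bound $|(A^k)^n \hat b_0(k)| \geq e^n$ forces the existence of a deterministic realization $(y_1,\dots,y_n) \in (\T^3)^n$ achieving $|\widehat{\sol^{u,\kappa}_{0,2n} b_0}(k)| \geq e^n$.

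The central creative step is constructing $u_0$ so that $A^k|_{k^\perp}$ has spectral radius at least $e^{1+\delta}$ and moreover amplifies the particular direction $\hat b_0(k) \in k^\perp$. At $\kappa=0$, the evolution $\sol^{\tau_y u_0,0}_{0,2} b$ is given by the explicit pushforward formula $(\nabla X^y)\cdot (b \circ (X^y)^{-1})$ along the Lagrangian flow $X^y$, so $A^k$ is computable in closed form from $\nabla X^y_2$. Adapting the constructions of~\cite[Chapter~11.4]{childress_stretch_1995} to $\T^3$, I would design $u_0$ as a smooth, compactly supported concatenation of two shear pulses in orthogonal planes of amplitude $M \gg 1$. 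The resulting $2\times 2$ matrix on $k^\perp$ has a clean explicit form analogous to the stretch--twist factor of the stretch--twist--fold paradigm, whose spectral radius grows as a power of $M$, allowing arbitrary prescribed amplification. To ensure the dominant eigendirection is not orthogonal to $\hat b_0(k)$, I would either (i) prepend a single preparatory period whose flow rotates $\hat b_0(k)$ into the dominant eigenvector---this is permitted since $u$ may depend on $b_0$, and it costs only a constant factor in the final estimate---or (ii) design $u_0$ with enough symmetry that $A^k|_{k^\perp}$ is a scalar multiple of a unitary, giving direction-independent amplification.

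For $\kappa \in (0, \kappa_0]$ with $\kappa_0$ sufficiently small, a standard Duhamel perturbation argument shows that $A^k(\kappa)$ is close in operator norm to $A^k(0)$, with the error controlled by the heat semigroup against the bounded smooth coefficients of the induction equation. Since the condition $|k|=1$ picks out only finitely many modes, a single $\kappa_0$ suffices uniformly. Choosing $\delta$ large enough that the $\kappa$-perturbation does not drop the per-period amplification below $e$ yields the required $e^n$ bound. The uniform regularity estimates~\eqref{eq.unif-estimates} follow because $u_0$ has fixed $C^\infty$ bounds independent of $b_0$ and $\kappa$; spatial translations preserve $L^\infty_{t,x}$ norms of derivatives, and concatenation of finitely many $C_c^\infty$ pieces on disjoint time intervals remains $C_c^\infty$ on $(0,2n) \times \T^3$.

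The main obstacle is the second stage: carrying out the explicit ODE computation for the shear-composition $u_0$ and verifying that its averaged matrix $A^k|_{k^\perp}$ has prescribed large spectral radius. Ensuring uniform amplification in the direction of $\hat b_0(k)$ (rather than merely along the top eigenvector) is the subtlety requiring the preparatory-rotation or symmetry device described above. Once this direction issue is handled, the $\kappa$-perturbation and gluing arguments are routine parabolic estimates.
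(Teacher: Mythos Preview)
Your overall strategy---translation averaging via Corollary~\ref{cor.averaged-dynamics} to reduce to iterated matrix products $(A^k)^n$ on $k^\perp$, the probabilistic pigeonhole to extract a good realization $(y_1,\dots,y_n)$, and the $\kappa$-perturbation of $A^k$---matches the paper's proof exactly. The construction you sketch for $u_0$ (two orthogonal shear pulses) is precisely the flow $W_\lambda$ the paper uses, and your Duhamel argument is the content of Corollary~\ref{cor.diffusive-controls}.

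The one place your proposal diverges, and where there is a genuine gap, is the direction issue. Your option~(ii) is not available for the natural shear construction: the paper computes (Lemma~\ref{lem.matrix-comp} and the following lemma) that $A^k|_{k^\perp}$ for $W_\lambda$ has determinant $\alpha^4<1$, so one eigenvalue is necessarily small and the matrix cannot be a large scalar times a unitary. Your option~(i), read literally as rotating $\hat b_0(k)$ onto the dominant eigenvector, cannot be done with a single fixed preparatory flow obeying uniform bounds: any fixed preparatory matrix $B^k$ still has a one-dimensional bad set $(B^k)^{-1}E_1\subset k^\perp$, and some $\hat b_0(k)$ will land there. What actually works---and what option~(i) becomes once you carry it out---is to keep \emph{two} fixed period flows whose subdominant eigenspaces intersect $k^\perp$ transversally, so that every nonzero $\hat b_0(k)\in k^\perp$ avoids at least one of them. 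This is exactly the paper's device (Proposition~\ref{prop.is-controls}): take $u_1=W_R$ and $u_2=W_{-R}$, compute the eigenvectors explicitly, and check that the two subdominant directions in $k^\perp$ are linearly independent. One then picks $u_1$ or $u_2$ depending on $b_0$ and iterates that single choice. So your framework is correct, but the ``preparatory rotation or symmetry'' fix needs to be replaced by (or recognized as) the two-flow transversality argument; without it the proof does not close for $\hat b_0(k)$ lying in the subdominant eigenspace.
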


The second proposition is a unique continuation result. It guarantees if $b_0$ is non-trivial (and sufficiently smooth), then $b_t$ is non-zero for all finite $t$. 

\begin{proposition}
    \label{prop.doesnt-vanish}
    Let $b \in L^2_{\mathrm{sol},0}(\T^3)$ solve
    \[\partial_t b - \kappa\Delta b + u \cdot \nabla b - b \cdot \nabla u = 0.\]
    Then
    \[\|b\|_{L^2_x}^2(t) \geq \exp\Bigg({-2}\bigg(\int_0^t \kappa \exp\Big(C\int_0^s \|\nabla^2 u\|_{L^\infty}(r)\,dr \Big)\frac{\|\nabla b(0,\cdot)\|_{L^2_x}^2}{\|b(0,\cdot)\|_{L^2_x}^2}
    + \|\nabla u\|_{L^\infty_x}(s)\,ds\bigg)\Bigg) \|b(0,\cdot)\|_{L^2_x}.\]
    In particular, if $\|b(0,\cdot)\|_{L^2_x} \ne 0, \|b(0,\cdot)\|_{H^1_x} < \infty,$ and $\nabla^2 u \in L^1([0,T], L^\infty(\T^3)),$ then $b(T,\cdot) \ne 0$.
\end{proposition}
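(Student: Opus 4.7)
My plan is a classical Poon-type unique continuation argument, coupling an energy identity for $\|b\|_{L^2_x}^2$ with Gronwall control on the "frequency" $H(t) := \|\nabla b(t)\|_{L^2_x}^2 / \|b(t)\|_{L^2_x}^2$. The target inequality follows by first showing the pointwise lower bound $\frac{d}{dt}\log \|b(t)\|_{L^2}^2 \geq -2\kappa H(t) - 2\|\nabla u(t)\|_{L^\infty_x}$, and then producing the multiplicative upper bound $H(t) \leq H(0)\exp(C\int_0^t \|\nabla^2 u\|_{L^\infty_x}\,dr)$; plugging the latter into the former and integrating yields the statement, and the finiteness of the exponent under the hypotheses immediately forces nonvanishing.

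\textbf{Step 1 (energy).} Test the equation against $b$. The transport term $\int (u\cdot\nabla b)\cdot b\,dx = \tfrac12 \int u\cdot\nabla|b|^2\,dx$ vanishes by $\nabla\cdot u = 0$, the dissipation yields $-2\kappa\|\nabla b\|_{L^2}^2$, and the stretching term $2\int(b\cdot\nabla u)\cdot b$ is bounded in absolute value by $2\|\nabla u\|_{L^\infty_x}\|b\|_{L^2}^2$. Dividing by $\|b\|_{L^2}^2$ produces the claimed differential inequality for $\log\|b\|_{L^2}^2$.

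\textbf{Step 2 (frequency control).} Test the equation against $-\Delta b$ to compute $\frac{d}{dt}\|\nabla b\|_{L^2}^2$. After integration by parts that moves derivatives off $\nabla b$ onto $u$, the transport contribution collapses (again using $\nabla\cdot u = 0$ to kill a $u\cdot\nabla|\nabla b|^2/2$ term) to one bounded by $C\|\nabla u\|_{L^\infty_x}\|\nabla b\|_{L^2}^2$, while the stretching contribution splits into terms bounded by $C\|\nabla u\|_{L^\infty_x}\|\nabla b\|_{L^2}^2 + C\|\nabla^2 u\|_{L^\infty_x}\|b\|_{L^2}\|\nabla b\|_{L^2}$; altogether $\tfrac{d}{dt}\|\nabla b\|_{L^2}^2 \leq -2\kappa\|\Delta b\|_{L^2}^2 + C\|\nabla u\|_{L^\infty_x}\|\nabla b\|_{L^2}^2 + C\|\nabla^2 u\|_{L^\infty_x}\|b\|_{L^2}\|\nabla b\|_{L^2}$. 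Differentiating $H$ and using the Cauchy--Schwarz interpolation $\|\nabla b\|_{L^2}^4 \leq \|b\|_{L^2}^2\|\Delta b\|_{L^2}^2$ lets the $-2\kappa\|\Delta b\|^2/\|b\|^2$ term absorb the unwanted $2\kappa H^2$ term coming from differentiating the denominator, leaving
\[
H'(t) \leq C\bigl(\|\nabla u\|_{L^\infty_x}+\|\nabla^2 u\|_{L^\infty_x}\bigr) H(t) + C\|\nabla^2 u\|_{L^\infty_x}\sqrt{H(t)}.
\]
Substituting $G := \sqrt{H}$ linearizes this to $G' \leq C(\|\nabla u\|_{L^\infty}+\|\nabla^2 u\|_{L^\infty})G + C\|\nabla^2 u\|_{L^\infty}$, and Gronwall yields the required bound on $H$, with the $\|\nabla u\|_{L^\infty}$ contribution absorbed either into the constant or into the additive $\|\nabla u\|_{L^\infty}(s)$ term already appearing outside the exponential in the stated formula.

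\textbf{Conclusion and obstacle.} Inserting the upper bound on $H(s)$ into the Step 1 inequality and integrating from $0$ to $t$ gives the stated exponential lower bound. For the nonvanishing, the hypotheses $\|b_0\|_{L^2}\ne 0$, $\|b_0\|_{H^1}<\infty$ guarantee $H(0)<\infty$, and $\nabla^2 u \in L^1_t L^\infty_x$ makes $\int_0^T \exp(C\int_0^s\|\nabla^2 u\|_{L^\infty})\,ds$ finite, so the exponential factor in the lower bound is strictly positive. The one delicate point I expect is closing the Gronwall loop in Step 2: the mixed term $\int b_k(\partial_l\partial_k u_i)\partial_l b_i$ naturally produces a $\sqrt{H}$, not $H$, on the right-hand side of the differential inequality for $H$, so the passage to $G=\sqrt{H}$ (or, equivalently, to $1+H$) is essential; every other estimate in the argument is of textbook character.
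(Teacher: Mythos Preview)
Your approach is the paper's: an $L^2$ energy estimate on $b$, Gr\"onwall control on the frequency $H=\|\nabla b\|_{L^2}^2/\|b\|_{L^2}^2$ with the interpolation $\|\nabla b\|_{L^2}^4\le\|b\|_{L^2}^2\|\Delta b\|_{L^2}^2$ cancelling the $\kappa$-terms, then substitution. The only divergence is your ``obstacle,'' which on $\T^3$ is not one: since $b\in L^2_{\mathrm{sol},0}$ is mean-zero, Poincar\'e gives $\|b\|_{L^2}\le C\|\nabla b\|_{L^2}$, so the mixed term $\|\nabla^2 u\|_{L^\infty}\|b\|_{L^2}\|\nabla b\|_{L^2}$ is already bounded by $C\|\nabla^2 u\|_{L^\infty}\|\nabla b\|_{L^2}^2$, and the ODI for $H$ is linear from the outset---this is precisely what the paper does (it also absorbs $\|\nabla u\|_{L^\infty}\le C\|\nabla^2 u\|_{L^\infty}$ the same way). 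Your $G=\sqrt{H}$ detour is correct but yields an inhomogeneous Gr\"onwall bound with an additive piece not involving $H(0)$, so it does not reproduce the displayed inequality in its exact form; the Poincar\'e shortcut does.
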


Our final proposition is another controllability result, which allows us to move some mass onto Fourier modes of unit magnitude provided $b_0$ is nontrivial.

\begin{proposition}
    \label{prop.is-transitive}
     Let $b_0 \in L^2_{\mathrm{sol},0}(\T^3)$ such that $b_0 \ne 0$ and $\kappa \geq 0$. Then there exists a time $T>0,$ a Fourier mode $k \in \Z^3$ such that $|k| = 1$, and a flow $u : [0,\infty) \times \T^3 \to \R^3$ with $\nabla \cdot  u=0$ such that if $b(t,x)$ solves 
    \[\begin{cases}
        \partial_t b - \kappa \Delta b + u \cdot \nabla b - b \cdot \nabla u = 0\\
        b(0,\cdot) = b_0(\cdot),
    \end{cases}\]
    then for any $t \geq T$, $\hat b(t,k) \ne 0$. Further, the flow can be taken so that $u \in C_c^\infty((0,T) \times \T^3)$ and so that for any $\ell,\alpha$,
    \[\|\partial_t^\ell \partial_x^\alpha u\|_{L^\infty_{t,x}} \leq C(\ell,\alpha),\]
    where the constant doesn't depend on $b_0$ or $\kappa$.
\end{proposition}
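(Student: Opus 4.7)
The plan is to realize $u$ as a concatenation of single-spatial-frequency flows on unit time intervals, each performing a one-step hop in Fourier space that walks any nonzero mode of $\hat b_0$ down to a target mode of unit magnitude; after this active period, $u \equiv 0$ lets diffusion preserve the nonvanishing of the Fourier mass on the target. Since $b_0 \neq 0$ is mean-zero, pick $k_0 \in \Z^3 \setminus \{0\}$ with $\hat b_0(k_0) \neq 0$. If $|k_0| = 1$, take $u \equiv 0$ and $T = 1$: the equation reduces to pure diffusion and $\hat b(t, k_0) = e^{-\kappa t} \hat b_0(k_0) \neq 0$ for all $t$. Otherwise, fix a target $k^* \in \{\pm e_1, \pm e_2, \pm e_3\}$ and a lattice path $k_0 = p_0, p_1, \ldots, p_m = k^*$ in $\Z^3$ with unit steps $j_\ell := p_{\ell-1} - p_\ell \in \{\pm e_1, \pm e_2, \pm e_3\}$. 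On each interval $[\ell-1, \ell]$ I set
\[u(t,x) = \phi(t - \ell + 1)\bigl(A_\ell e^{i j_\ell \cdot x} + \bar A_\ell e^{-i j_\ell \cdot x}\bigr),\]
with $\phi \in C_c^\infty((0,1))$ a fixed bump, $A_\ell \in \C^3$ satisfying $A_\ell \cdot j_\ell = 0$ (divergence-free) and $|A_\ell| \leq 1$, and take $u \equiv 0$ off $(0, m) \times \T^3$. Setting $T := m$, we have $u \in C_c^\infty((0, T) \times \T^3)$ with $C^\infty$ derivative bounds uniform in $b_0$ and $\kappa$ since $|j_\ell|, |A_\ell| \leq 1$.

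The central claim, proved by induction on $\ell$, is that the $A_\ell$ can be chosen consecutively so that $\hat b(\ell, p_\ell) \neq 0$. The Fourier-space induction equation reads
\[\partial_t \hat b(k) + \kappa|k|^2 \hat b(k) = -i \sum_{j_1+j_2=k}\bigl[(\hat u(j_1) \cdot j_2)\hat b(j_2) - (\hat b(j_2) \cdot j_1)\hat u(j_1)\bigr],\]
so on $[\ell-1,\ell]$ the mode $\hat b(p_\ell)$ is coupled only to $\hat b(p_{\ell-1})$ and $\hat b(2p_\ell - p_{\ell-1})$. Given the inductive hypothesis $v := \hat b(\ell-1, p_{\ell-1}) \neq 0$, the map $A_\ell \mapsto \hat b(\ell, p_\ell)$ is real-analytic; Duhamel expansion about $A_\ell = 0$ shows its first-order contribution coming from $v$ equals $-i C_\kappa \bigl[(\bar A_\ell \cdot p_{\ell-1}) v + (v \cdot j_\ell) \bar A_\ell\bigr]$, where $C_\kappa = \int_0^1 \phi(s) e^{-\kappa|p_\ell|^2(1-s) - \kappa|p_{\ell-1}|^2 s}\,ds > 0$. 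If this linear form on $\{j_\ell\}^\perp \subset \C^3$ is not identically zero, then $A_\ell \mapsto \hat b(\ell, p_\ell)$ is a non-constant real-analytic function, so generic small $A_\ell$ produces $\hat b(\ell, p_\ell) \neq 0$.

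The main obstacle is avoiding the degeneracy where the linear form vanishes identically, which by direct check occurs precisely when both $v \cdot j_\ell = 0$ and $p_{\ell-1} \parallel j_\ell$. The divergence-free condition $v \cdot p_{\ell-1} = 0$ forces $v \cdot e_i = 0$ whenever $p_{\ell-1} \parallel e_i$, so degeneracy arises exactly when $p_{\ell-1}$ lies on a coordinate axis and we attempt to hop along that axis. If the direct path toward $k^*$ would require such a degenerate hop, I insert a single transverse detour hop to move $p_\ell$ off the coordinate axis; the subsequent hops are then non-degenerate. Each detour adds at most two steps, so the path length is $m = O(|k_0|_{\ell^1})$, bounding $T$. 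This closes the induction and gives $\hat b(T, k^*) \neq 0$. For $t \geq T$, $u \equiv 0$ reduces the equation to pure diffusion, so $\hat b(t, k^*) = e^{-\kappa(t-T)}\hat b(T, k^*) \neq 0$ for all $t \geq T$. The time $T$ depends on $b_0$ through $|k_0|$ but is independent of $\kappa$, and all derivative bounds on $u$ are uniform, completing the proof.
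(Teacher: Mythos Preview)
Your argument is correct and follows a genuinely different route from the paper's. The paper moves mass in a \emph{single} time unit: it picks $j$ with $\hat b_0(j)=w\neq 0$, uses a flow with spatial frequency $\mathrm{e}_z-j$ and small amplitude $\varepsilon=\varepsilon(|j|)$ (so that the uniform $C^k$ bounds survive despite the large frequency), computes $\frac{d}{d\varepsilon}\big|_{\varepsilon=0}\, v\cdot\hat\sol^{u^\varepsilon,\kappa}_{0,1}(\mathrm{e}_z,j)w\neq 0$ for a well-chosen test vector $v$, and then invokes the translation identity of Lemma~\ref{lem.translation-fourier} to average out all other modes of $b_0$ and select a translate with $\hat b(1,\mathrm{e}_z)\neq 0$. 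This yields $T=1$ independently of $b_0$. Your lattice walk with unit-frequency hops sidesteps the averaging trick entirely and gets the uniform derivative bounds for free, at the price of a $b_0$-dependent $T=O(|k_0|_{\ell^1})$, which the statement permits. One point to make explicit: at each step the full first-order term in $A_\ell$ has, in addition to the $\bar A_\ell$-linear piece coming from $v$, an $A_\ell$-linear piece coming from $\hat b(\ell-1,2p_\ell-p_{\ell-1})$; since a sum $L_1(\bar A)+L_2(A)$ of a conjugate-linear and a $\C$-linear form vanishes identically only if both do (replace $A$ by $iA$), nonvanishing of the $v$-contribution alone already forces the analytic map to be nonconstant. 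Also make sure the detoured path never visits $p_\ell=0$.
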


As a consequence of these propositions, we can prove the main theorem according to the scheme sketched in Subsection~\ref{sub:overview}.

\begin{proof}[Proof of Theorem~\ref{thm.main-thm}]
    Place the diffusivities $\kappa_j$ in the order 
    \[(\tilde \kappa_1, \tilde \kappa_1,\tilde \kappa_2, \tilde \kappa_1, \tilde \kappa_2,\tilde \kappa_3,...)= (\kappa_1,\kappa_1,\kappa_2,\kappa_1,\kappa_2,\kappa_3,....),\] so that each diffusivity $\kappa_j$ is visited infinitely many times in the sequence $\tilde \kappa_j$.

    Let $b_0 := \sin(x)\mathrm{e}_z$ and let $b^\kappa$ be the solution to~\eqref{eq.induction-equation} with $\kappa$ diffusivity and $b^\kappa(0,\cdot) = b_0(\cdot)$ as in the statement of the theorem. 
    
    The flow $u$ will be constructed inductively as follows. We inductively construct times $t_n$ and specify the flow $u$ on $[t_{n-1}, t_n]$ in such a way that
    \begin{enumerate}
        \item  \label{item.growth}\[\sup_{t \in [t_{n-1},t_n]} \max_{|k|=1}\frac{1}{t} \log |\hat b^{\tilde \kappa_n}(t,k)|^2 \geq \tfrac{1}{4}.\]
        \item $t_n - t_{n-1} \geq 1$.
        \label{item.enough-time}
        \item \label{item.unif-est} $u$ obeys the uniform estimates~\eqref{eq.unif-estimates} on $[t_{n-1}, t_n]$.
        \item \label{item.support-cond}For any $n$, $u|_{[t_{n-1}, t_n]} \in C_c^\infty((t_{n-1},t_n) \times \T^3)$.
    \end{enumerate}
   One can readily verify that completing such a construction suffices to prove the theorem, where the final item is to ensure the distinct intervals of the flow can be smoothly patched together.

The proof of the base case of the induction is just a simpler version of the inductive step, so let us just show the inductive step.

   We first take $u|_{[t_{n-1}, t_{n-1}+1]}  =0$, thus Item~\ref{item.enough-time} is satisfied. Next, by Proposition~\ref{prop.doesnt-vanish}, we have that $b^{\tilde \kappa_n}(t_{n-1} +1) \in L^2_{\mathrm{sol},0}(\T^3)$ and is nonzero. Thus we can apply Proposition~\ref{prop.is-transitive} to specify $u|_{[t_{n-1} +1,t_{n-1}+1+T]}$ so that $\hat b^{\tilde \kappa_n}(t_{n-1}+1+T,k) \ne 0$ for some $|k| =1$. We can then apply Proposition~\ref{prop.exists-growing} to choose $R \in 2\N$ large enough and specify $u|_{[t_{n-1} +1 + T, t_{n-1} + 1+ T +R]}$ in such a way that for $t_n := t_{n-1} + 1 + T +R$,
    \[\max_{|k| =1} \frac{1}{t_n} \log |\hat b^{\tilde \kappa_n}(t_n,k)|^2 \geq \tfrac{1}{4}.\]
    This then gives Item~\ref{item.growth}. Items~\ref{item.unif-est} and~\ref{item.support-cond} follow directly from the uniform estimates and conditions on the time support of $u$ in Proposition~\ref{prop.exists-growing} and Proposition~\ref{prop.is-transitive}. Thus the induction concludes.
\end{proof}

\section{Proof of Proposition~\ref{prop.exists-growing}}
\begin{definition}
    For $y \in \T^3$, let $\tau_y$ denote the translation operator by $y$,
    \[\tau_y u(t,x) := u(t,x-y).\]
\end{definition}

\begin{definition}
    Fix $u \in L^\infty_tW^{1,\infty}_x$ with $\nabla \cdot u  =0 , \kappa \geq 0$, and $0 \leq s < t <\infty.$ Then for $k,j \in \Z^3,$ we define the Fourier matrix element $\hat \sol^{u,\kappa}_{s,t}(k,j) \in \R^{3 \times 3}$ of the solution operator $\sol_{s,t}^{u,\kappa}$ so that for any $v \in \C^3,$
    \[\hat \sol^{u,\kappa}_{s,t}(k,j) v:= \int e^{- 2\pi i k \cdot x} \sol^{u,\kappa}_{s,t} ( v e^{2\pi i j \cdot z})\,dx.\]
\end{definition}

Taking the usual convention for the Fourier transform,
\[b(x) =: \sum_{j \in \Z^3} e^{2\pi i j \cdot x} \hat b(j),\]
we note that the above definition is given precisely so that
\begin{equation}
\label{eq.matrix-element-mult}
\widehat{\sol^{u,\kappa}_{s,t} b}(k) = \sum_{j \in \Z^d} \hat \sol^{u,\kappa}_{s,t}(k,j) \hat b(j).
\end{equation}
The primary fact we will be exploiting is the following simple observation on how Fourier matrix elements transform under translations of the advecting flow.

\begin{lemma}
    \label{lem.translation-fourier}
    Fix $u \in L^\infty_tW^{1,\infty}_x$ with $\nabla \cdot u  =0 , \kappa \geq 0$, and $0 \leq s < t <\infty.$ The for any $k,j \in \Z^3$ and $y \in \T^3$, we 
    \[ \hat \sol^{\tau_y u,\kappa}_{s,t}(k,j) = e^{2\pi i (j-k) \cdot y} \hat \sol^{u,\kappa}_{s,t}(k,j).\]
\end{lemma}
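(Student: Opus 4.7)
\textbf{Proof plan for Lemma~\ref{lem.translation-fourier}.}

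The plan is to exploit the translation covariance of the induction equation. First I would establish the basic commutation identity
\[
\sol^{\tau_y u,\kappa}_{s,t} \circ \tau_y \;=\; \tau_y \circ \sol^{u,\kappa}_{s,t},
\]
where $\tau_y$ acts on functions of $x$ by $(\tau_y f)(x) := f(x-y)$. This is immediate from the form of the induction equation~\eqref{eq.induction-equation}: if $b(t,x)$ solves the equation driven by $u$, then a direct chain-rule check shows that $b(t,x-y)$ solves the same equation with $u$ replaced by $\tau_y u$, since translation commutes with $\partial_t$, $\Delta$, and with the operations $u \cdot \nabla$ and $b \cdot \nabla u$ in the required sense (the gradients land on the translated field).

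Next I would apply this identity to the complex plane-wave initial data $g(x) := v\, e^{2\pi i j \cdot x}$. Since $g(x-y) = e^{-2\pi i j \cdot y} g(x)$, linearity of the solution operator yields
\[
\sol^{\tau_y u,\kappa}_{s,t} g \;=\; e^{2\pi i j \cdot y}\, \tau_y\bigl(\sol^{u,\kappa}_{s,t} g\bigr).
\]
Plugging this into the definition of $\hat\sol^{\tau_y u,\kappa}_{s,t}(k,j)v$ and performing the change of variables $z = x-y$ in the Fourier integral produces the extra factor $e^{-2\pi i k \cdot y}$, giving the total phase $e^{2\pi i (j-k)\cdot y}$ in front of $\hat\sol^{u,\kappa}_{s,t}(k,j)v$. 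Since $v \in \C^3$ was arbitrary, the matrix identity claimed in the lemma follows.

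There is no real obstacle here; the lemma is essentially a bookkeeping statement about how translating the velocity field rephases Fourier matrix elements. The only mild point worth writing out carefully is the commutation identity in step one, since it is the structural input that makes the rest of the argument a routine change of variables. Everything remains well-defined under the assumed $L^\infty_t W^{1,\infty}_x$ regularity on $u$, which makes the solution operator well-posed on $L^2$ and the manipulations above rigorous by a standard density argument from smooth data.
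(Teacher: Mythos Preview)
Your proposal is correct and follows exactly the same approach as the paper: establish the commutation identity $\sol^{\tau_y u,\kappa}_{s,t}\circ\tau_y=\tau_y\circ\sol^{u,\kappa}_{s,t}$, apply it to the plane-wave data $v\,e^{2\pi i j\cdot x}$, and then change variables in the Fourier integral to pick up the phase $e^{2\pi i(j-k)\cdot y}$. The paper's proof is simply the compressed version of what you wrote.
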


\begin{proof}
    Note that for any $b \in L^2_{\mathrm{sol},0}$, we have that
    \[\sol_{s,t}^{\tau_y u,\kappa} \tau_y b = \tau_y \sol_{s,t}^{u,\kappa} b.\]
    Therefore,
    \begin{align*}
        \hat \sol^{\tau_y u,\kappa}_{s,t}(k,j)v 
      &= \int e^{- 2\pi i k \cdot x} \tau_{y}\sol^{u,\kappa}_{s,t} ( v \tau_{-y}e^{2\pi i j \cdot z})\,dx 
    \\&=  \int e^{- 2\pi i k \cdot x} \Big(\sol^{u,\kappa}_{s,t} ( v e^{2\pi i j \cdot (z+y)})\Big)(x-y)\,dx 
        \\&=  e^{2\pi i (j-k)\cdot y} \int e^{- 2\pi i k \cdot (x-y)} \Big(\sol^{u,\kappa}_{s,t} ( v e^{2\pi i j \cdot z})\Big)(x-y)\,dx
        \\&=  e^{2\pi i (j-k)\cdot y} \int e^{- 2\pi i k \cdot x} \sol^{u,\kappa}_{s,t} ( v e^{2\pi i j \cdot z})\,dx 
        \\&= e^{2\pi i (j-k) \cdot y} \hat T^{u,\kappa}_{s,t}(k,j) v,
     \end{align*}
     as desired.
\end{proof}

From the above computations, we can understand the \textit{averaged} evolution of a single Fourier in terms of simple matrix products on $\C^3$.

\begin{corollary}
    \label{cor.averaged-dynamics}
    Fix $u \in L^\infty_tW^{1,\infty}_x$ with $\nabla \cdot u  =0 , \kappa \geq 0$, and $0 \leq s < t <\infty.$ Then for any $b \in L^2_{\mathrm{sol},0}$ and any $k\in \Z^d$,
    \[\int \widehat{\sol_{s,t}^{\tau_y u,\kappa} b}(k)\,dy = \hat \sol_{s,t}^{u,\kappa}(k,k) \hat b(k).\]
\end{corollary}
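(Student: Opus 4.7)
The plan is to combine the Fourier matrix element identity~\eqref{eq.matrix-element-mult} with Lemma~\ref{lem.translation-fourier} and then use the orthogonality of characters on $\T^3$ to collapse the resulting sum to its diagonal $j=k$ contribution.

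First, I would write $b$ in its Fourier series $b(x) = \sum_{j \in \Z^3} e^{2\pi i j \cdot x}\hat b(j)$ and apply the definition of the Fourier matrix elements via~\eqref{eq.matrix-element-mult} to the translated flow $\tau_y u$, obtaining
\[
\widehat{\sol^{\tau_y u,\kappa}_{s,t} b}(k) = \sum_{j \in \Z^3} \hat\sol^{\tau_y u,\kappa}_{s,t}(k,j)\,\hat b(j).
\]
Then I would substitute the translation identity from Lemma~\ref{lem.translation-fourier}, $\hat \sol^{\tau_y u,\kappa}_{s,t}(k,j) = e^{2\pi i (j-k)\cdot y}\hat \sol^{u,\kappa}_{s,t}(k,j)$, and integrate in $y$ over $\T^3$. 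By Fubini (justified since $b \in L^2$ and $\sol^{u,\kappa}_{s,t}$ is bounded on $L^2_{\mathrm{sol},0}$, so the partial sums converge in $L^2$ uniformly in $y$), the integral passes inside the sum, giving
\[
\int_{\T^3} \widehat{\sol^{\tau_y u,\kappa}_{s,t} b}(k)\,dy = \sum_{j \in \Z^3} \hat \sol^{u,\kappa}_{s,t}(k,j)\,\hat b(j)\int_{\T^3} e^{2\pi i(j-k)\cdot y}\,dy.
\]

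Finally, I would use the orthogonality relation $\int_{\T^3} e^{2\pi i(j-k)\cdot y}\,dy = \delta_{j,k}$ to kill every term except $j=k$, leaving precisely $\hat \sol^{u,\kappa}_{s,t}(k,k)\hat b(k)$, which is the claimed identity.

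There is no real obstacle here; the only mildly technical point is the interchange of the $y$-integral with the Fourier summation, and this is handled by the $L^2$-boundedness of the solution operator (uniform in $y$, since $\|\tau_y u\|_{L^\infty_t W^{1,\infty}_x} = \|u\|_{L^\infty_t W^{1,\infty}_x}$) together with Plancherel and Fubini. Everything else is a direct application of Lemma~\ref{lem.translation-fourier} and character orthogonality.
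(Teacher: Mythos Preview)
Your proposal is correct and follows exactly the approach the paper intends: the paper's proof is the single line ``Direct from~\eqref{eq.matrix-element-mult} and Lemma~\ref{lem.translation-fourier},'' and you have simply written out what that means, including the character-orthogonality step that collapses the sum to $j=k$.
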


\begin{proof}
    Direct from~\eqref{eq.matrix-element-mult} and Lemma~\ref{lem.translation-fourier}.
\end{proof}

\begin{definition}
    Through we denote the standard basis vectors on $\R^3$
    \[\mathrm{e}_x := (1,0,0),\quad \mathrm{e}_y := (0,1,0),\quad \mathrm{e}_z := (0,0,1).\]
\end{definition}

We now state a proposition that asserts the existence of flows $u_1,u_2 \in C_c^\infty((0,2) \times \T^3)$ for which we have properties on the Fourier matrix elements $\hat \sol^{u_1,0}_{0,2}(\mathrm{e}_z, \mathrm{e}_z),\hat \sol^{u_2,0}_{0,2}(\mathrm{e}_z, \mathrm{e}_z)$ sufficient to cause growth in the averaged dynamics given by Corollary~\ref{cor.averaged-dynamics} for an arbitrary nontrivial $\hat b_0(\mathrm{e}_z) \in \C^3$. Note that since $b_0$ is divergence-free, we have that $\mathrm{e}_z \cdot \hat b_0(\mathrm{e}_z) =0$; this is why we consider $v \in \C^3, v\ne 0$ with $\mathrm{e}_z \cdot v =0$ in Proposition~\ref{prop.is-controls} and Corollary~\ref{cor.diffusive-controls} below.

\begin{proposition}
\label{prop.is-controls}
    There exists $u_1,u_2 \in C_c^{\infty}((0,2) \times \T^3)$ with $\nabla \cdot u_1 = \nabla \cdot u_2 =0$ with the following properties. Let
    \[A_1 := \hat \sol^{u_1,0}_{0,2}(\mathrm{e}_z,\mathrm{e}_z) \quad \text{and} \quad A_2 := \hat \sol^{u_2,0}_{0,2}(\mathrm{e}_z,\mathrm{e}_z).\]
    Then $A_1,A_2$ each have only simple eigenvalues. Let $\xi_{1,1},\xi_{1,2},\xi_{1,3}$ and $\xi_{2,1},\xi_{2,2},\xi_{2,3}$ be the eigenvector sets of $A_1,A_2$ respectively. Let $\lambda_1, \lambda_2$ the eigenvalues associated to $\xi_{1,1}$ and $\xi_{2,1}$ respectively. Then we have that
    \begin{enumerate}
        \item 
        \label{item.evals-big}
        $|\lambda_1|, |\lambda_2|> e$.
        \item 
        \label{item.span-cond}
        For all $v \in \C^3, v \ne 0$ with $\mathrm{e}_z \cdot v =0,$
        \[v \not \in \mathrm{span}\{\xi_{1,2}, \xi_{1,3}\} \quad \text{or} \quad v \not \in \mathrm{span}\{\xi_{2,2}, \xi_{2,3}\}.\]
    \end{enumerate}
\end{proposition}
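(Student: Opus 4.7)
My plan is to build $u_1, u_2$ as long chains of independently translated copies of a simple base pulse, exploiting the averaged-dynamics identity of Corollary~\ref{cor.averaged-dynamics}. Two preliminary observations guide the construction: first, divergence-freeness of $v e^{2\pi i z}$ forces $v \in V := \{w \in \C^3 : w_z = 0\}$, and since the induction equation preserves divergence-freeness, each $A_j$ sends $V$ into $V$ and hence has block upper-triangular form with a $2 \times 2$ block $A_j|_V$; second, the third eigenvector of each $A_j$ generically has a nonzero $\mathrm{e}_z$-component, so the span condition reduces to requiring that the subdominant $V$-eigenvectors of $A_1$ and $A_2$ not be parallel.

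For the base pulse I would take a helical Roberts-cell-type flow $\tilde u \in C_c^\infty((0,1) \times \T^3)$ of the form
\[ \tilde u(t,x) = c\chi_1(t)(0, \sin 2\pi x, \cos 2\pi x) + c\chi_2(t)(\cos 2\pi y, 0, \sin 2\pi y), \]
with $\chi_1, \chi_2$ unit-integral smooth bumps supported in the two halves of $[0,1]$ and $c > 0$ a tunable amplitude. The time-$1$ flow map $\tilde\Phi$ is an explicit composition of two shears, so by direct computation from the identity $\tilde A v = \int_{\T^3} D\tilde\Phi(y) v \, e^{-2\pi i(\mathrm{e}_z \cdot \tilde\Phi(y) - z)}\,dy$ and the Jacobi--Anger expansion, $\tilde A|_V$ has eigenvalues $\lambda_\pm = J_0^2 + 2\pi^2 c^2 J_1^2 \pm 2\pi c J_1 \sqrt{J_0^2 + \pi^2 c^2 J_1^2}$ where $J_k := J_k(2\pi c)$ is the $k$th Bessel function of the first kind. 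The $J_1$-contributions encode the helicity of $\tilde u$ (they would vanish for a reflection-symmetric flow), and numerical evaluation gives $|\tilde\lambda| := |\lambda_+| > 1$ at suitable $c$ (e.g.\ $|\tilde\lambda| \approx 1.4$ at $c = 0.3$). I then take $u_1$ on $[0,2]$ to be $N$ time-compressed copies of $\tilde u$, translated spatially by $Y_1, \dots, Y_{N-1} \in \T^3$ between pulses. Iterating Lemma~\ref{lem.translation-fourier} expresses $A_1(Y)$ as a sum over Fourier paths $\mathrm{e}_z = j_0 \to j_1 \to \cdots \to j_N = \mathrm{e}_z$ in $\Z^3$, the $Y_r$ appearing only as phase factors $e^{2\pi i(j_r - j_{r+1}) \cdot Y_r}$; integrating over $Y \in (\T^3)^{N-1}$ kills every non-constant path and leaves $\int A_1(Y)\,dY = \tilde A^N$ with top eigenvalue $|\tilde\lambda|^N$. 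Choosing $N$ so that $|\tilde\lambda|^N \gg e$ and applying the triangle inequality to $\int A_1(Y) \xi\,dY = \tilde\lambda^N \xi$ for $\xi$ the top eigenvector of $\tilde A^N$ already yields some deterministic $Y^*$ with $\|A_1(Y^*)\|_{\mathrm{op}} > e$.

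To upgrade this operator-norm bound to an honest top-eigenvalue bound I would add a variance/concentration argument: bound the Frobenius second moment $\int \|A_1(Y) - \tilde A^N\|_F^2\,dY$ using the same path-sum expansion (now carrying two copies of the indices) and the polynomial decay of $|\hat\sol^{\tilde u,0}_{0,1}(\mathrm{e}_z, j)|$ in $|j|$ coming from the smoothness of $\tilde u$. For $N$ large this variance is much smaller than $|\tilde\lambda|^{2N}$, so Chebyshev gives a specific $Y^*$ with $A_1(Y^*)$ arbitrarily close in Frobenius norm to $\tilde A^N$; continuity of simple eigenvalues then transfers both the simple-eigenvalue property and the bound $|\lambda_1| > e$. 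I would construct $u_2$ identically but with $\tilde u$ replaced by its cyclic-coordinate-permutation, which rotates the eigenbasis of $A_2|_V$ away from that of $A_1|_V$ and makes the subdominant $V$-eigenvectors non-parallel. The main obstacle is the concentration argument itself: the top eigenvalue is not a linear function of the matrix, so the triangle inequality alone gives only operator-norm information, and carefully controlling the variance of the iterated Fourier-path sum is what I expect to be the most delicate piece of the proof.
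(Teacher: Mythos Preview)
The concentration step you flag as ``delicate'' may in fact fail, not merely be hard. After the path-coincidence reduction you describe, $\int \|A_1(Y)\|_F^2\,dY$ equals the sum over all closed Fourier paths $\mathrm{e}_z \to j_1 \to \cdots \to j_{N-1} \to \mathrm{e}_z$ of $\|\hat T(\mathrm{e}_z,j_{N-1})\cdots\hat T(j_1,\mathrm{e}_z)\|_F^2$, and there is no general reason this is dominated by the constant-path term $\|\tilde A^N\|_F^2$. In the scalar toy model $\hat T = \bigl(\begin{smallmatrix} a & b \\ c & 0 \end{smallmatrix}\bigr)$ the mean is $a^N$ but the second moment is the $(1,1)$ entry of $\bigl(\begin{smallmatrix} |a|^2 & |b|^2 \\ |c|^2 & 0 \end{smallmatrix}\bigr)^{\!N}$, whose top eigenvalue strictly exceeds $|a|^2$ whenever $bc\ne 0$; the variance then grows \emph{faster} than the mean squared and Chebyshev gives nothing. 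Your composite pulse does have nonzero off-diagonal elements $\hat T(k,\mathrm{e}_z)$ for $k\ne\mathrm{e}_z$ (the two half-pulses together populate all of $\{(n,m,1):n,m\in\Z\}$), and polynomial decay in $|k|$ only bounds these entries, it does not make them small enough to beat this mechanism. You would need a flow-specific spectral argument to show the relevant ``entrywise-squared'' operator has spectral radius below $|\tilde\lambda|^2$, and nothing in the proposal supplies one.

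The paper avoids iteration altogether by two observations you are one step away from. First, in the single composite $W_\lambda = U_\lambda$-then-$V_{-\lambda}$, with $U_\lambda$ depending only on $x$ and $V_{-\lambda}$ only on $y$, the translation-invariance lemma kills every cross term in $\sum_k \hat\sol^{V_{-\lambda}}(\mathrm{e}_z,k)\hat\sol^{U_\lambda}(k,\mathrm{e}_z)$: the first factor forces $k_x=0$, the second forces $k_y=0$, so only $k=\mathrm{e}_z$ survives and the $(\mathrm{e}_z,\mathrm{e}_z)$ matrix element is \emph{exactly} the product of the two single-pulse matrices, not merely on average. Second, the amplitude $\lambda$ is placed on the in-plane shear $\cos(2\pi x)\,\mathrm{e}_y$ alone (which enters the matrix linearly) while the $z$-shear is kept at fixed amplitude $\tfrac14$; the top eigenvalue then scales like $\lambda^2\beta^2$ and exceeds $e$ for one explicit $R$, no chaining required. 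The resulting $3\times 3$ matrix is Hermitian, and taking $u_2 = W_{-R}$ just flips the sign of the off-diagonal entry, so the two top eigenvectors are linearly independent and the span condition is immediate by orthogonality. Your base pulse is essentially the paper's; if you decouple the two shear amplitudes as above you can discard the iteration, the random translations, and the concentration argument entirely.
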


We defer the mostly computational proof to Subsection~\ref{sub:construction}. Let us see how to conclude Proposition~\ref{prop.exists-growing} from the existence of the ``controls'' as in Proposition~\ref{prop.is-controls}. Noting that Proposition~\ref{prop.is-controls} is only about the non-diffusive problem, we first have to lift the properties of Proposition~\ref{prop.is-controls} to the diffusive case $\kappa>0$. This is provided by Corollary~\ref{cor.diffusive-controls}. Since we are only interested in a single matrix element of the finite time evolution, the diffusion acts as a regular perturbation. Note that this is a key point of the analysis: reducing the problem to a finite time problem makes the Laplacian a regular perturbation in contrast to the highly singular behavior at infinite time.

\begin{corollary}
\label{cor.diffusive-controls}
    There exists $u_1,u_2 \in C_c^{\infty}((0,2) \times \T^3)$ with $\nabla \cdot u_1 = \nabla \cdot u_2 =0$ as well as $\kappa_0 >0$ with the following properties. For any $\kappa \geq 0$, let
    \[A_1^\kappa := \hat \sol^{u_1,\kappa}_{0,2}(\mathrm{e}_z,\mathrm{e}_z) \quad \text{and} \quad A_2^\kappa := \hat \sol^{u_2,\kappa}_{0,2}(\mathrm{e}_z,\mathrm{e}_z).\]
    Then for all $v \in \C^3, v \ne 0$ such that $\mathrm{e}_z \cdot v =0$ and all $\kappa \in [0,\kappa_0]$, 
    \[\liminf_{n \to \infty} \frac{1}{n}\log |(A_1^\kappa)^n v| > 1 \quad \text{or}\quad \liminf_{n \to \infty} \frac{1}{n}\log |(A_2^\kappa)^n v| > 1.\]
\end{corollary}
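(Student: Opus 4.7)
I will proceed by showing that the structural features of $A_i$ from Proposition~\ref{prop.is-controls} persist under small perturbations as $\kappa$ varies near $0$, treating the diffusion $\kappa\Delta$ as a regular perturbation on the \emph{finite} time window $[0,2]$. The first step is to establish that $\kappa \mapsto A_i^\kappa$ is continuous at $\kappa = 0$. Since
\[A_i^\kappa v = \int e^{-2\pi i \mathrm{e}_z \cdot x} \sol^{u_i,\kappa}_{0,2}(v e^{2\pi i \mathrm{e}_z \cdot z})(x)\,dx\]
for $v \in \C^3$ with $\mathrm{e}_z \cdot v = 0$ (so the argument is divergence-free), standard energy estimates applied to the difference of solutions to~\eqref{eq.induction-equation} with the smooth flow $u_i$ give $\|\sol^{u_i,\kappa}_{0,2}b_0 - \sol^{u_i,0}_{0,2}b_0\|_{L^2} \to 0$ as $\kappa \to 0^+$ for any fixed smooth $b_0$. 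Taking Fourier coefficients yields $A_i^\kappa \to A_i$ in operator norm on $\C^3$.

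Next, I invoke standard matrix perturbation theory. By Proposition~\ref{prop.is-controls}, each $A_i = A_i^0$ has three simple eigenvalues, so for $\kappa \in [0,\kappa_0]$ with $\kappa_0$ sufficiently small, $A_i^\kappa$ also has three simple eigenvalues $\lambda_{i,j}^\kappa$ and eigenvectors $\xi_{i,j}^\kappa$ that can be chosen continuously in $\kappa$. In particular $\lambda_i^\kappa := \lambda_{i,1}^\kappa \to \lambda_i$, so $|\lambda_i^\kappa| > e$ for all such $\kappa$. Let $V_i^\kappa := \mathrm{span}\{\xi_{i,2}^\kappa, \xi_{i,3}^\kappa\}$ and let $\pi_i^\kappa$ denote the projection onto $\C \xi_{i,1}^\kappa$ along $V_i^\kappa$. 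Defining the linear map $\Psi^\kappa : \mathrm{e}_z^\perp \to \C^3 \oplus \C^3$ by $\Psi^\kappa v := (\pi_1^\kappa v, \pi_2^\kappa v)$, Item~\ref{item.span-cond} of Proposition~\ref{prop.is-controls} is precisely the statement that $\Psi^0$ is injective. Injectivity of a linear map between finite-dimensional spaces is an open condition in the operator-norm topology, so $\Psi^\kappa$ remains injective for all $\kappa \in [0,\kappa_0]$ after possibly shrinking $\kappa_0$. In particular, for every nonzero $v \in \mathrm{e}_z^\perp$ there exists at least one index $i \in \{1,2\}$ for which $\pi_i^\kappa v \ne 0$.

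Finally, for such $v$, pick an $i$ as above and expand $v = c_1 \xi_{i,1}^\kappa + c_2 \xi_{i,2}^\kappa + c_3 \xi_{i,3}^\kappa$ with $c_1 \ne 0$. Then
\[(A_i^\kappa)^n v = c_1 (\lambda_i^\kappa)^n \xi_{i,1}^\kappa + c_2 (\lambda_{i,2}^\kappa)^n \xi_{i,2}^\kappa + c_3 (\lambda_{i,3}^\kappa)^n \xi_{i,3}^\kappa,\]
and applying the bounded projection $\pi_i^\kappa$ and taking norms yields $|(A_i^\kappa)^n v| \geq \|\pi_i^\kappa\|^{-1} |c_1| |\lambda_i^\kappa|^n |\xi_{i,1}^\kappa|$. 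Dividing by $n$, taking logarithms, and sending $n\to\infty$ gives $\liminf_n \tfrac{1}{n}\log |(A_i^\kappa)^n v| \geq \log|\lambda_i^\kappa| > 1$, which is the desired conclusion.

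The principal technical point is the first step. As emphasized in the introduction, the $\kappa \to 0$ limit is genuinely singular on an infinite time horizon, but on the fixed finite interval $[0,2]$ the diffusion is a regular perturbation and the convergence $A_i^\kappa \to A_i^0$ is routine; all remaining steps are direct consequences of standard finite-dimensional linear algebra together with perturbation theory for matrices with simple spectrum.
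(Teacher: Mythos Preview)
Your argument is correct and follows essentially the same route as the paper's: continuity of $\kappa\mapsto A_i^\kappa$ at $\kappa=0$, persistence of simple spectrum and of the inequality $|\lambda_i^\kappa|>e$ by standard perturbation theory, and then openness of the span condition from Proposition~\ref{prop.is-controls}. Your repackaging of Item~\ref{item.span-cond} as injectivity of the finite-dimensional map $\Psi^\kappa$ is a clean way to make explicit the uniformity in $v$ that the paper's phrase ``the span condition is an open condition'' is implicitly invoking; otherwise the two proofs are the same.
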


\begin{proof}
    Since $A_1^0,A_2^0$ have only simple eigenvalues, we know that $A_1^\kappa, A^\kappa_2$ do also for $\kappa\in[0,\kappa_0]$ for $\kappa_0>0$ chosen sufficiently small. Then $A_1^\kappa, A^\kappa_2$ are diagonalizable, so let $\xi^\kappa_{1,1},\xi^\kappa_{1,2},\xi^\kappa_{1,3}$ and $\xi^\kappa_{2,1},\xi^\kappa_{2,2},\xi^\kappa_{2,3}$ be their eigenvectors, varying continuously in $\kappa$, with $\xi^0_{1,1} = \xi_1$ and $\xi^0_{2,1}= \xi_2$. Let $\lambda_1^\kappa$ be the eigenvalue associated to $\xi^\kappa_{1,1}$ and $\lambda_2^\kappa$ the eigenvalue associated to $\xi^\kappa_{2,1}.$ Then, by continuity, for $\kappa \in [0,\kappa_0]$ and $\kappa_0$ chosen sufficiently small, $|\lambda_1^\kappa|, |\lambda_2^\kappa| > e.$ Then either
    \[\liminf_{n \to \infty} \frac{1}{n}\log |(A_1^\kappa)^n v| > 1 \quad \text{or}\quad \liminf_{n \to \infty} \frac{1}{n}\log |(A_2^\kappa)^n v| > 1,\]
    in which case we conclude, or 
    \[
        \label{eq.bad-span} v \in \mathrm{span}\{\xi^\kappa_{1,2}, \xi^{\kappa}_{1,3}\} \quad \text{and} \quad v \in \mathrm{span}\{\xi^\kappa_{2,2}, \xi^{\kappa}_{2,3}\}. 
    \]
    Thus we just need to prove
            \[v \not \in \mathrm{span}\{\xi^\kappa_{1,2}, \xi^\kappa_{1,3}\} \quad \text{or} \quad v \not \in \mathrm{span}\{\xi^\kappa_{2,2}, \xi^\kappa_{2,3}\}.\]
    This is then direct from Item~\ref{item.span-cond} of Proposition~\ref{prop.is-controls}, the continuity in $\kappa$, that the span condition is an open condition, and choosing $\kappa_0$ small enough.
\end{proof}

With sufficient understanding of the matrix elements at all diffusivities $\kappa \in [0,\kappa_0]$, we are now prepared to prove Proposition~\ref{prop.exists-growing}.

\begin{proof}[Proof of Proposition~\ref{prop.exists-growing}]
    By hypothesis, there exists $|k| = 1$ such that $\hat b_0(k) \ne 0$. Without loss of generality---since otherwise we can perform a discrete rotation---we can suppose that $\hat b_0(\mathrm{e}_z) \ne 0.$ We let $\kappa_0$ as in Corollary~\ref{cor.diffusive-controls} and let $\kappa \in [0,\kappa_0].$ Note that by the divergence-free condition on $b_0$, we have that $\mathrm{e}_z \cdot \hat b_0(\mathrm{e}_z) =0.$ By Corollary~\ref{cor.diffusive-controls}, let $A^\kappa_1, A^\kappa_2$ as in the statement of the Corollary~\ref{cor.diffusive-controls}, we thus have
    \[\liminf_{n \to \infty} \frac{1}{n}\log |(A_1^\kappa)^n \hat b_0(\mathrm{e}_z)| > 1 \quad \text{or}\quad \liminf_{n \to \infty} \frac{1}{n}\log |(A_2^\kappa)^n \hat b_0(\mathrm{e}_z)| > 1.\]
    Let us suppose without loss of generality that the inequality holds for $A_1^\kappa$. By Corollary~\ref{cor.averaged-dynamics} and the definition of $A_1^\kappa$, 
    \begin{equation}
    \label{eq.iterated-averaged-dynamics}
    \int e^{-2\pi i \mathrm{e}_z\cdot x}\Big(\sol^{\tau_{y_n} u_1, \kappa}_{0,2} \sol^{\tau_{y_{n-1}} u_1,\kappa}_{0,2} \cdots \sol^{\tau_{y_1} u_1,\kappa}_{0,2} b_0\Big)(x)\, dy_1 \cdots dy_n dx = (A_1^\kappa)^n \hat b_0(\mathrm{e}_z).
    \end{equation}
    For any sequence $y_j \in \T^3$, we define
    \[u_{(y_j)}(t,x) = \tau_{y_j} u_1(t-2(j-1), x) \quad \text{for } t \in [2(j-1),2j],\]
    and let 
    \[b_{(y_j)}(t,x) := \Big(\sol_{0,t}^{u_{(y_j)}, \kappa} b_0\Big)(x).\]
    
    Then by~\eqref{eq.iterated-averaged-dynamics},
    \[\int \widehat{b_{(y_j)}}(2n,\mathrm{e}_z)\,dy_1 \cdots dy_n= (A^\kappa_1)^n \hat b_0(\mathrm{e}_z).\]
    Since $\liminf_{n \to \infty} \frac{1}{n}\log |(A_1^\kappa)^n \hat b_0(\mathrm{e}_z)| > 1$ we can choose $n \in \N$ such that
    \[ \int |\widehat{b_{(y_j)}}(2n,\mathrm{e}_z)|\,dy_1 \cdots dy_n \geq |(A_1^\kappa)^n \hat b_0(\mathrm{e}_z)| \geq e^n.\]
    Using that $dy_j$ is a probability measure, we can then choose $(y_1,...,y_n)$ so that
    \[|\widehat{\sol^{u,\kappa}_{0,2n} b_0}(\mathrm{e}_z)| \geq e^n,\]
    with 
    \[u(t,x) = \begin{cases}
        u_{(y_1,...,y_n,0,0,...)}(t,x) & t \in [0,2n]\\ 0 & \text{otherwise}.
    \end{cases}\]
    We then conclude, noting that the regularity and support properties of $u$ are direct from the construction as translations of a smooth function $u_1 \in C_c^\infty((0,2) \times \T^3).$
\end{proof}

\subsection{Construction of the controls: proof of Proposition~\ref{prop.is-controls}}
\label{sub:construction}
In this subsection we always use the coordinates $(x,y,z) \in\T^3$ with $x,y,z \in \T$ and in particular never take $x \in \T^3.$ Instead we use $(x,y,z) = r \in \T^3$.

\begin{definition}
    Let $\phi \in C_c^\infty((0,1/2))$ be a nonnegative function such that $\int_0^{1/2} \phi(t)\,dt = 1$. Define the flows $U_\lambda,V_\lambda,W_\lambda$ as
\begin{align*}
U_\lambda(t,x,y,z) &= \lambda \phi(t)\cos(2\pi x) \mathrm{e}_y  + \frac{1}{4}\phi(t-1/2) \sin(2\pi x)\mathrm{e}_z\\
V_\lambda(t,x,y,z) &= \lambda \phi(t)\cos(2\pi y) \mathrm{e}_x  + \frac{1}{4}\phi(t-1/2) \sin(2\pi y)\mathrm{e}_z\\
W_\lambda(t,x,y,z) &= U_\lambda(t,x,y,z) + V_{-\lambda}(t-1,x,y,z).
\end{align*}
\end{definition}

We note the flow $W_\lambda$ is similar to Otani's fast dynamo candidate~\cite{otani_fast_1993} which is well discussed in~\cite[Chapter 2, Chapter 11.4]{childress_stretch_1995}. Our goal in this section is to compute explicitly the matrix elements of the solution operator associated to this flow. We start by computing those associated to $U_\lambda, V_\lambda$.

\begin{lemma}
\label{lem.matrix-comp}
\begin{equation}
\label{eq.explicit-matrices}
\hat\sol^{U_\lambda,0}_{0,1}(\mathrm{e}_z,\mathrm{e}_z) = \begin{pmatrix} \alpha & 0 & 0 \\ i \lambda \beta  & \alpha& 0 \\ 0 & 0 & \alpha\end{pmatrix}; \quad  \hat\sol^{V_\lambda,0}_{0,1}(\mathrm{e}_z,\mathrm{e}_z) = \begin{pmatrix} \alpha & i \lambda \beta & 0 \\ 0  & \alpha& 0 \\ 0 & 0 & \alpha\end{pmatrix};\end{equation}
where $\alpha \in (0,1), \beta>0$. In particular,
\[\alpha = J_0(\tfrac{\pi}{2});\quad \beta = 2\pi J_1(\tfrac{\pi}{2}),\]
where the $J_n$ are the Bessel functions of the first kind.
\end{lemma}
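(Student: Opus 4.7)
The plan is a direct computation of the push-forward of elementary Fourier modes through two shear flows composed in sequence. Because $\kappa=0$, the induction equation becomes the equation for a divergence-free vector field transported along the Lagrangian flow, whose solution is simply $b(t,\Phi_t r_0) = D\Phi_t(r_0)\,b_0(r_0)$, where $\Phi_t$ is the flow of $u$. The point is that $U_\lambda$ and $V_\lambda$ are each piecewise-in-time shears (supported on disjoint intervals $(0,1/2)$ and $(1/2,1)$ thanks to the support of $\phi$), for which $\Phi_t$ has an explicit closed form.

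First I would compute $\hat\sol^{U_\lambda,0}_{0,1}(\mathrm{e}_z,\mathrm{e}_z)$ by acting on each of the three initial data $b_0(r_0)=\mathrm{e}_x e^{2\pi i z_0}$, $\mathrm{e}_y e^{2\pi i z_0}$, $\mathrm{e}_z e^{2\pi i z_0}$, corresponding to the three columns. On $[0,1/2]$, since $\int_0^{1/2}\phi=1$, the flow is $\Phi_{1/2}(x_0,y_0,z_0)=(x_0,\,y_0+\lambda\cos(2\pi x_0),\,z_0)$ with Jacobian having the single nontrivial entry $-2\pi\lambda\sin(2\pi x_0)$ in the $(y,x)$ slot. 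On $[1/2,1]$ the flow is $(x_0,y_0,z_0)\mapsto(x_0,y_0,z_0+\tfrac14\sin(2\pi x_0))$ with Jacobian entry $\tfrac\pi2\cos(2\pi x_0)$ in the $(z,x)$ slot. Composing the two push-forwards and evaluating at the inverse flow map turns $e^{2\pi i z_0}$ into $e^{2\pi i z}e^{-i(\pi/2)\sin(2\pi x)}$; then the Fourier integral against $e^{-2\pi i z}$ collapses the $y,z$ integrals to $1$ and leaves three scalar integrals in $x$:
\begin{equation*}
\int_0^1 e^{-i(\pi/2)\sin(2\pi x)}\,dx,\qquad \int_0^1 \sin(2\pi x)\,e^{-i(\pi/2)\sin(2\pi x)}\,dx,\qquad \int_0^1 \cos(2\pi x)\,e^{-i(\pi/2)\sin(2\pi x)}\,dx.
\end{equation*}
The first equals $J_0(\pi/2)=:\alpha$ by the standard Bessel identity $J_0(a)=\int_0^1 e^{ia\sin(2\pi x)}dx$; differentiating in $a$ shows the second equals $-iJ_1(\pi/2)$, so that multiplication by the prefactor $-2\pi\lambda$ yields $i\lambda\beta$ with $\beta=2\pi J_1(\pi/2)$; the third is the total derivative of $e^{-i(\pi/2)\sin(2\pi x)}/(- i\pi)$ and hence vanishes. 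Assembling the columns gives the first matrix in \eqref{eq.explicit-matrices}.

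The matrix for $V_\lambda$ follows by the same computation with the roles of the $x$- and $y$-coordinates exchanged in the first-stage shear: now the nontrivial Jacobian entry of $\Phi_{1/2}$ sits in the $(x,y)$ slot, so the extra component appears in the $\mathrm{e}_x$ direction when the initial data has a $y$-component, which accounts for the transposition of the off-diagonal entry to the $(1,2)$ position. The second-stage shear and the Bessel integrals are identical. Positivity of $\alpha=J_0(\pi/2)$ and $\beta=2\pi J_1(\pi/2)$, and the bound $\alpha<1$, are standard facts about $J_0,J_1$ on $(0,\pi)$.

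The only subtlety I anticipate is bookkeeping: since the two shears act on disjoint time intervals, the Duhamel/composition step is immediate, but one must carefully evaluate the push-forward at $\Phi_{1/2}^{-1}$ and $(\Phi_{1/2\to 1})^{-1}$ in the right order so that the phase factor $e^{-i(\pi/2)\sin(2\pi x)}$ (as opposed to, say, a $y$-dependent phase) falls out. Once this is set up cleanly, the computation reduces to three one-dimensional Bessel integrals and yields the claim.
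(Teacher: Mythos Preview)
Your argument is correct. You use the Lagrangian characteristics formula $b(t,\cdot)=(\Phi_t)_* b_0$, compute the two shear maps explicitly, and reduce to the three one-variable integrals, which you correctly identify with $J_0(\pi/2)$, $-iJ_1(\pi/2)$, and $0$. The paper instead stays in Eulerian coordinates: it writes out the induction PDE componentwise, observes that the conditions $\partial_y b=0$ and $\partial_z b=2\pi i b$ are preserved by the flow, and thereby reduces the PDE to an ODE in $t$ which it integrates on each half-interval before taking the Fourier transform. Both routes land on exactly the same Bessel integrals; your push-forward computation is arguably cleaner and more geometric, while the paper's PDE reduction makes the role of the symmetries (independence of $y$, single $z$-mode) explicit.

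One small correction in your description of the $V_\lambda$ case: it is not only the first-stage shear that has $x$ and $y$ exchanged --- the second-stage shear is $\tfrac14\phi(t-1/2)\sin(2\pi y)\mathrm{e}_z$, so it too depends on $y$ rather than $x$. This is why the phase factor after pulling back is $e^{-i(\pi/2)\sin(2\pi y)}$ and the surviving one-dimensional integrals are in $y$; the integrals themselves are of course identical in form to the $U_\lambda$ case, which is presumably what you meant.
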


\begin{proof}
    Note that the result for $V_\lambda$ follows symmetrically from the result for $U_\lambda$, so let us focus on the latter. By changing time variables, it suffices to compute the dynamics under the flow given by $\tilde U_\lambda$, where
    \[\tilde U_\lambda(t,x,y,z) = \begin{cases} 2 \lambda\cos(2\pi x) \mathrm{e}_y & 0 \leq t <1/2\\ \frac{1}{2} \sin(2\pi x)\mathrm{e}_z & 1/2 \leq t \leq1. \end{cases}\]

    Writing out the equation with advecting flow $\tilde U_\lambda$ explicitly in coordinates, we get
    \[\begin{cases} \partial_t b + 2 \lambda \cos(2\pi x) \partial_y b + 4 \pi \lambda \sin(2\pi x) b_x\mathrm{e}_y =0 & t \in [0,1/2]  \\ \partial_t b + \tfrac{1}{2} \sin(2\pi x) \partial_z b - \pi \cos(2\pi x)  b_x \mathrm{e_z} =0 & t \in [1/2,1].\end{cases}\]
    We need to compute the evolution under this PDE of the initial data of the form $e^{2\pi i z} v$ for $v \in \C^3$.

    Note that in the case under consideration, $\partial_y b(0,r) =0$, and that differentiating the equation by $\partial_y$ we see this property is preserved by the flow. Thus $\partial_y b(t,r) =0$. Similarly, we have that $\partial_z b(0,r) = 2\pi i b(0,r)$, and one can verify (e.g.\ by taking the Fourier transform) that this property is preserved by the flow, so we have that $\partial_z b(t,r) = 2\pi i b(t,r).$ Thus the PDE, for the data under consideration, becomes
    \[\begin{cases} \partial_t b + 4 \pi \lambda \sin(2\pi x) b_x\mathrm{e}_y =0 & t \in [0,1/2]  \\ \partial_t b + i\pi \sin(2\pi x) b - \pi \cos(2\pi x)  b_x \mathrm{e_z} =0 & t \in [1/2,1].\end{cases}\]
    Then we see that 
    \[\begin{pmatrix} b_x(\tfrac{1}{2}, r) \\ b_y(\tfrac{1}{2}, r) \\ b_z(\tfrac{1}{2}, r) \end{pmatrix} = \begin{pmatrix} b_x(0, r) \\  -2\pi \lambda \sin(2\pi x)b_x(0,r) + b_y(0, r) \\ b_z(0, r) \end{pmatrix}\]
    and
    \[\begin{pmatrix} b_x(1, r) \\ b_y(1, r) \\ b_z(1, r) \end{pmatrix}  =\begin{pmatrix} e^{-i \pi \sin(2\pi x) /2} b_x(\tfrac{1}{2}, r) \\  e^{-i \pi \sin(2\pi x) /2}b_y(\tfrac{1}{2}, r) \\  e^{-i \pi \sin(2\pi x) /2}b_z(\tfrac{1}{2}, r) + \pi \cos(2\pi x) b_x(\tfrac{1}{2},r) \int_0^{1/2} e^{-i \pi \sin(2\pi x) t}\,dt\end{pmatrix}.\]
    Thus combining the computations,
    \[\begin{pmatrix} b_x(1, r) \\ b_y(1, r) \\ b_z(1, r) \end{pmatrix}  =\begin{pmatrix} e^{-i \pi \sin(2\pi x) /2} b_x(0, r) \\  {-2\pi} \lambda \sin(2\pi x) e^{-i \pi \sin(2\pi x) /2}b_x(0,r) + e^{-i \pi \sin(2\pi x) /2} b_y(0, r) \\  e^{-i \pi \sin(2\pi x) /2}b_z(0, r) + \pi \cos(2\pi x) b_x(0,r) \int_0^{1/2} e^{-i \pi \sin(2\pi x) t}\,dt\end{pmatrix}.\]
    Thus if $b(0,r) = e^{2\pi i z} v$ with $v \in \C^3$, we have that
    \[\begin{pmatrix} \hat b_x(1, \mathrm{e}_z) \\ \hat b_y(1, \mathrm{e}_z) \\ \hat b_z(1, \mathrm{e}_z) \end{pmatrix}  =\begin{pmatrix} v_x\int e^{-i \pi \sin(2\pi x) /2}\,dx \\  {-2\pi} \lambda v_x\int \sin(2\pi x) e^{-i \pi \sin(2\pi x) /2}\,dx  + v_y \int e^{-i \pi \sin(2\pi x) /2}\,dx\\  v_z\int e^{-i \pi \sin(2\pi x) /2}\,dx  + \pi v_x  \int_0^{1/2} \int \cos(2\pi x) e^{-i \pi \sin(2\pi x) t}\,dxdt\end{pmatrix}.\]
    We recall the Hansen-Bessel formula,
    \[J_n(z) = (-1)^n \int e^{i z \sin(2\pi x) + 2\pi i nx}\,dx,\]
    where $J_n(z)$ is the $n$-th Bessel function of the first kind. Then we see that 
    \[\int e^{-i \pi \sin(2\pi x)/2}\,dx = J_0(-\tfrac{\pi}{2}) = J_0(\tfrac{\pi}{2})\]
    using that $J_n(z)$ is even for even $n$. We also have that
    \begin{align*}\int \sin(2\pi x) e^{-i \pi \sin(2\pi x)/2}\,dx &= \frac{1}{2i}\int e^{2\pi i x} e^{-i \pi \sin(2\pi x)/2} -  e^{-2\pi i x} e^{-i \pi \sin(2\pi x)/2}\,dx
    \\&= \frac{1}{2i}\int e^{2\pi i x} e^{-i \pi \sin(2\pi x)/2} -  e^{2\pi i x} e^{i \pi \sin(2\pi x)/2}\,dx
    \\&= -\frac{1}{2i} \big(J_1(-\tfrac{\pi}{2}) - J_1(\tfrac{\pi}{2})\big) = -i J_1(\tfrac{\pi}{2}),
    \end{align*}
    using that $J_n(z)$ is odd for odd $n$. Finally, we compute
    \begin{align*}
        \int \cos(2\pi x) e^{-i \pi \sin(2\pi x) t}\,dx &= \frac{1}{2}\int e^{2\pi i x} e^{-i \pi \sin(2\pi x) t}+e^{-2\pi i x} e^{-i \pi \sin(2\pi x) t}\,dx
        \\&= -\frac{1}{2} \big(J_1(-\pi t) + J_1(\pi t)\big) = 0,
    \end{align*}
    where again use that $J_1(z)$ is odd. Putting it together, we conclude~\eqref{eq.explicit-matrices}. That $\alpha \in (0,1), \beta>0$ follow from explicit properties of the Bessel functions.
\end{proof}

We now deduce the matrix element associated $W_\lambda$ from the above computation.

\begin{lemma}
    \[\hat\sol^{W_\lambda,0}_{0,2}(\mathrm{e}_z,\mathrm{e}_z) = \begin{pmatrix} \alpha^2 + \lambda^2 \beta^2 & -i\lambda \alpha \beta & 0 \\ i \lambda \alpha \beta  & \alpha^2& 0 \\ 0 & 0 & \alpha^2\end{pmatrix},\]
    with $\alpha, \beta$ as in Lemma~\ref{lem.matrix-comp}.
\end{lemma}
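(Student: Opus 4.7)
The plan is to exploit the time-splitting of $W_\lambda$ to factor the solution operator, then use the $x$- and $y$-independence of the two pieces $U_\lambda$ and $V_{-\lambda}$ to collapse the natural convolution in Fourier space to a single matrix product.

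First, I would observe that since $\phi$ is supported in $(0,1/2)$, the flow $U_\lambda$ is supported in $(0,1)\times\T^3$ and the time-shifted piece $V_{-\lambda}(t-1,\cdot)$ is supported in $(1,2)\times\T^3$, so on the two subintervals $W_\lambda$ coincides with $U_\lambda$ and with $V_{-\lambda}(\cdot-1,\cdot)$ respectively. Consequently the solution operator factors,
\[
\sol^{W_\lambda,0}_{0,2} \;=\; \sol^{V_{-\lambda}(\cdot-1,\cdot),0}_{1,2}\circ\sol^{U_\lambda,0}_{0,1},
\]
and by time-translation invariance the first factor has the same Fourier matrix elements as $\sol^{V_{-\lambda},0}_{0,1}$.

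Next, I would apply the general composition rule for the Fourier matrix elements (the same derivation that underlies \eqref{eq.matrix-element-mult}) to write
\[
\hat\sol^{W_\lambda,0}_{0,2}(\mathrm{e}_z,\mathrm{e}_z) \;=\; \sum_{j'\in\Z^3}\hat\sol^{V_{-\lambda},0}_{0,1}(\mathrm{e}_z,j')\,\hat\sol^{U_\lambda,0}_{0,1}(j',\mathrm{e}_z).
\]
The key step, and really the only substantive one, is to show that only the term $j'=\mathrm{e}_z$ survives. Because $U_\lambda$ depends on neither $y$ nor $z$, the operators $\partial_y$ and $\partial_z$ commute with the induction equation driven by $U_\lambda$, so its evolution preserves the $y$- and $z$-Fourier modes. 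Thus $\hat\sol^{U_\lambda,0}_{0,1}(j',\mathrm{e}_z)=0$ unless $j'=(n,0,1)$ for some $n\in\Z$. Symmetrically, $V_{-\lambda}$ depends on neither $x$ nor $z$, so its evolution preserves the $x$- and $z$-Fourier modes, giving $\hat\sol^{V_{-\lambda},0}_{0,1}(\mathrm{e}_z,j')=0$ unless $j'=(0,m,1)$ for some $m\in\Z$. The only $j'$ satisfying both constraints is $\mathrm{e}_z=(0,0,1)$.

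Finally, with the sum reduced to a single term, the identity becomes a $3\times 3$ matrix product,
\[
\hat\sol^{W_\lambda,0}_{0,2}(\mathrm{e}_z,\mathrm{e}_z) \;=\; \hat\sol^{V_{-\lambda},0}_{0,1}(\mathrm{e}_z,\mathrm{e}_z)\,\hat\sol^{U_\lambda,0}_{0,1}(\mathrm{e}_z,\mathrm{e}_z),
\]
and I would substitute the explicit matrices from Lemma~\ref{lem.matrix-comp}, replacing $\lambda$ by $-\lambda$ in the $V$-factor, and multiply them out to recover the stated formula. The main obstacle is cleanly justifying the mode-preservation argument that collapses the convolution; once that is in place, the rest is a routine matrix multiplication.
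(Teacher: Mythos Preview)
Your proposal is correct and follows essentially the same approach as the paper: factor the solution operator along the time-splitting, write the $(\mathrm{e}_z,\mathrm{e}_z)$ matrix element as a sum over intermediate modes, and use the spatial symmetries of $U_\lambda$ and $V_{-\lambda}$ to kill every term except $j'=\mathrm{e}_z$, leaving the explicit $3\times 3$ product from Lemma~\ref{lem.matrix-comp}. The only cosmetic difference is that you phrase the mode-preservation via commutation of $\partial_y,\partial_z$ (resp.\ $\partial_x,\partial_z$) with the evolution, whereas the paper invokes the translation identity of Lemma~\ref{lem.translation-fourier}; these are the same symmetry argument in different clothing.
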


\begin{proof}
    Note that
    \[\hat\sol^{W_\lambda,0}_{0,2}(\mathrm{e}_z,\mathrm{e}_z) = \sum_{k \in \Z^3}  \hat\sol^{V_{-\lambda},0}_{0,1}(\mathrm{e}_z,k)   \hat\sol^{U_\lambda,0}_{0,1}(k,\mathrm{e}_z).\]
    Note also that
    \[\begin{pmatrix} \alpha^2 + \lambda^2 \beta^2 & -i\lambda \alpha \beta & 0 \\ i \lambda \alpha \beta  & \alpha^2& 0 \\ 0 & 0 & \alpha^2\end{pmatrix} = \hat\sol^{V_\lambda,0}_{0,1}(\mathrm{e}_z,\mathrm{e}_z)   \hat\sol^{U_\lambda,0}_{0,1}(\mathrm{e}_z,\mathrm{e}_z).\]
    Thus it suffices to show that for any $k \in \Z^3$ if $k \ne \mathrm{e}_z$,
    \[ \hat\sol^{V_\lambda,0}_{0,1}(\mathrm{e}_z,k)   \hat\sol^{U_\lambda,0}_{0,1}(k,\mathrm{e}_z) = 0.\]
    In particular, we will show that if $\mathrm{e}_x \cdot (k-\mathrm{e}_z) \ne 0$ or $\mathrm{e}_z \cdot (k-\mathrm{e}_z) \ne 0 $, then $\hat\sol^{V_\lambda,0}_{0,1}(\mathrm{e}_z,k) =0$ and if $\mathrm{e}_y \cdot (k-\mathrm{e}_z)\ne 0$, then $\hat\sol^{U_\lambda,0}_{0,1}(k,\mathrm{e}_z) = 0.$

    Suppose first that $\mathrm{e}_x \cdot (k-\mathrm{e}_z)$. Note that for $s \in \R$, $\tau_{s \mathrm{e}_x} V_\lambda = V_\lambda$. Thus using Lemma~\ref{lem.translation-fourier}, we have
    \[e^{2\pi i s(k-\mathrm{e}_z) \cdot \mathrm{e}_x} \hat\sol^{V_\lambda,0}_{0,1}(\mathrm{e}_z,k) =\hat\sol^{\tau_{s\mathrm{e}_x} V_\lambda,0}_{0,1}(\mathrm{e}_z,k) =\hat\sol^{ V_\lambda,0}_{0,1}(\mathrm{e}_z,k).\]
    Then since by assumption $(k-\mathrm{e}_z) \cdot \mathrm{e}_x \ne 0$, this implies $\hat\sol^{V_\lambda,0}_{0,1}(\mathrm{e}_z,k) = 0$, as claimed.

    The remaining two cases follow similarly, using that $\tau_{s\mathrm{e}_z} V_\lambda = V_\lambda$ and $\tau_{s\mathrm{e}_y} U_\lambda = U_\lambda$.
\end{proof}

We note the following direct computation of the spectral properties of the above matrix.

\begin{lemma}
    The matrix
    \[\begin{pmatrix} \alpha^2 + \lambda^2 \beta^2 & -i\lambda \alpha \beta & 0 \\ i \lambda \alpha \beta  & \alpha^2& 0 \\ 0 & 0 & \alpha^2\end{pmatrix}\]
    has eigenvalues
    \[\frac{1}{2}  \beta^2 \lambda^2+  \alpha^2 +\frac{1}{2} |\beta \lambda| \sqrt{\beta^2 \lambda^2 + 4\alpha^2};\quad \frac{1}{2} \beta^2 \lambda^2+  \alpha^2 -\frac{1}{2} |\beta \lambda|  \sqrt{\beta^2 \lambda^2 + 4\alpha^2};\quad \alpha^2,\]
    with corresponding eigenvectors
    \[\begin{pmatrix}
        \frac{\beta^2 \lambda^2 +|\beta \lambda|  \sqrt{\beta^2 \lambda^2 + 4\alpha^2}}{2\alpha \beta \lambda}
        \\i\\0
    \end{pmatrix}; \quad \begin{pmatrix}
        \frac{\beta^2 \lambda^2 -|\beta \lambda|  \sqrt{\beta^2 \lambda^2 + 4\alpha^2}}{2\alpha \beta \lambda}
        \\i\\0
    \end{pmatrix};\quad \begin{pmatrix}
        0\\0\\1
    \end{pmatrix}.\]
\end{lemma}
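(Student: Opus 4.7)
The plan is to exploit the block structure of the matrix: the third row and column decouple, giving immediately the eigenvalue $\alpha^2$ with eigenvector $(0,0,1)^T$. Everything else then reduces to a $2\times 2$ spectral calculation on the upper-left block
\[
B := \begin{pmatrix} \alpha^2 + \lambda^2 \beta^2 & -i\lambda \alpha \beta \\ i \lambda \alpha \beta  & \alpha^2 \end{pmatrix}.
\]

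First I would compute the two invariants. The trace is $\operatorname{tr} B = 2\alpha^2 + \lambda^2\beta^2$, and the determinant is
\[
\det B = \alpha^2(\alpha^2 + \lambda^2\beta^2) - (-i\lambda\alpha\beta)(i\lambda\alpha\beta) = \alpha^4 + \alpha^2\lambda^2\beta^2 - \lambda^2\alpha^2\beta^2 = \alpha^4.
\]
So the characteristic polynomial is $\mu^2 - (2\alpha^2 + \lambda^2\beta^2)\mu + \alpha^4$, and the quadratic formula gives
\[
\mu = \alpha^2 + \tfrac{1}{2}\lambda^2\beta^2 \pm \tfrac{1}{2}\sqrt{(2\alpha^2 + \lambda^2\beta^2)^2 - 4\alpha^4}.
\]
A short simplification of the discriminant,
\[
(2\alpha^2 + \lambda^2\beta^2)^2 - 4\alpha^4 = \lambda^2\beta^2(\lambda^2\beta^2 + 4\alpha^2),
\]
yields $\sqrt{\cdot} = |\lambda\beta|\sqrt{\lambda^2\beta^2 + 4\alpha^2}$, matching the stated pair of eigenvalues.

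For the eigenvectors, for each eigenvalue $\mu$ of $B$ I would solve $(B - \mu I)v = 0$. Using the second row, $i\lambda\alpha\beta\, v_1 = (\mu - \alpha^2)v_2$; normalizing $v_2 = i$ gives $v_1 = (\mu - \alpha^2)/(\lambda\alpha\beta)$. Substituting the two values of $\mu$ and using $\mu - \alpha^2 = \tfrac12(\lambda^2\beta^2 \pm |\lambda\beta|\sqrt{\lambda^2\beta^2 + 4\alpha^2})$ reproduces the first two eigenvectors in the statement; then padding with a zero in the third entry (since the third row/column is decoupled) gives the claimed vectors in $\mathbb{C}^3$.

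There is really no obstacle here beyond bookkeeping; the entire statement is a direct computation and the only subtlety is making sure the absolute value $|\lambda\beta|$ is correctly extracted from the square root of the discriminant.
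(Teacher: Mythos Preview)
Your proposal is correct and is exactly the kind of direct computation the paper has in mind; in fact the paper gives no proof at all for this lemma, merely introducing it with ``We note the following direct computation of the spectral properties of the above matrix.'' Your block-diagonal reduction, trace/determinant computation for the $2\times 2$ block, and eigenvector normalization $v_2 = i$ are all correct and reproduce the stated eigenvalues and eigenvectors verbatim.
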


We are now ready to prove Proposition~\ref{prop.is-controls}.

\begin{proof}[Proof of Proposition~\ref{prop.is-controls}]
    We choose 
    \[u_1 := W_R \quad \text{and} \quad u_2 := W_{-R}\]
    where $R>0$ is chosen so that 
    \[\frac{1}{2}  \beta^2 R^2+  \alpha^2 +\frac{1}{2} |\beta R| \sqrt{\beta^2 R^2 + 4\alpha^2} > e\]
    and so that $A_1,A_2$, defined as in Proposition~\ref{prop.is-controls}, have simple eigenvalues. We have then already guaranteed Item~\ref{item.evals-big}. For Item~\ref{item.span-cond}, since $A_1, A_2$ are self-adjoint, and thus their eigenvectors are orthogonal, it suffices that if $v \in \C^3$, $v \ne 0$, is such that $\mathrm{e_z} \cdot v =0$, then 
    \[v \cdot \begin{pmatrix}
        \frac{\beta^2 R^2 +|\beta \lambda|  \sqrt{\beta^2 R^2 + 4\alpha^2}}{2\alpha \beta R}
        \\i\\0
    \end{pmatrix} \ne 0 \quad \text{or} \quad v \cdot \begin{pmatrix}
         -\frac{\beta^2 R^2 +|\beta \lambda|  \sqrt{\beta^2 R^2 + 4\alpha^2}}{2\alpha \beta R}
        \\i\\0
    \end{pmatrix} \ne 0.\]
    This is however direct from the linear independence of the two vectors above. Thus we conclude.
\end{proof}

\section{Proof of Proposition~\ref{prop.doesnt-vanish}}

The following proof essentially follows as in the scalar advection-diffusion case by directly applying energy estimates to $\|b\|_{L^2}$ as well as the ``projective $H^1$ norm'', $\frac{\|\nabla b\|_{L^2}}{\|b\|_{L^2}}.$

\begin{proof}[Proof of Proposition~\ref{prop.doesnt-vanish}]
    We first note that
    \begin{equation}
    \label{eq.lower-bound-prim}
    \frac{d}{dt} \|b\|_{L^2_x}^2 \geq - 2 \kappa \|\nabla b\|_{L^2_x}^2 -2\|b\|_{L^2_x}^2 \|\nabla u\|_{L^\infty_x} = \bigg({-2} \kappa \frac{\|\nabla b\|_{L^2_x}^2}{\|b\|_{L^2_x}^2} -2 \|\nabla u\|_{L^\infty_x}\bigg)\|b\|_{L^2_x}^2,
    \end{equation}
    so by Gr\"onwall's inequality.
    \begin{equation}
    \label{eq.lower-bound}
        \|b\|_{L^2_x}^2(t) \geq \exp\bigg({-2}\int_0^t \kappa \frac{\|\nabla b\|_{L^2_x}^2}{\|b\|_{L^2_x}^2}(s) + \|\nabla u\|_{L^\infty_x}(s)\,ds\bigg) \|b(0,\cdot)\|_{L^2_x}.
    \end{equation}
    
    So our goal now is to upper bound the growth of $\frac{\|\nabla b\|_{L^2_x}^2}{\|b\|_{L^2_x}^2}.$ Differentiating the equation, we see that
    \[\partial_t \nabla b - \kappa \Delta \nabla b + u \cdot \nabla \nabla b + \nabla u \cdot \nabla b - \nabla b \cdot \nabla u - b \cdot \nabla \nabla u =0.\]
    Therefore,
    \[\frac{d}{dt} \|\nabla b\|_{L^2_x}^2 \leq - 2\kappa \|\nabla^2 b\|_{L^2_x}^2 + 4\|\nabla u\|_{L^\infty_x} \|\nabla b\|_{L^2_x}^2 + C  \|\nabla b\|_{L^2_x}^2 \|\nabla^2 u\|_{L^\infty_x}.\]
    Thus
    \begin{equation}
    \label{eq.proj-upper-prim}
    \frac{d}{dt} \frac{\|\nabla b\|_{L^2_x}^2}{\|b\|_{L^2_x}^2} =  \frac{\frac{d}{dt}\|\nabla b\|_{L^2_x}^2}{\|b\|_{L^2_x}^2} -  \frac{\|\nabla b\|_{L^2_x}^2 \frac{d}{dt} \|b\|_{L^2_x}^2}{\|b\|_{L^2_x}^4} \leq  C\|\nabla^2 u\|_{L^\infty_x}\frac{\|\nabla b\|_{L^2_x}^2}{\|b\|_{L^2_x}^2} - 2\kappa \bigg(\frac{\|\nabla^2 b\|_{L^2_x}^2 }{\|b\|_{L^2_x}^2} - \frac{\|\nabla b\|_{L^2_x}^4}{\|b\|_{L^2_x}^4}\bigg),
    \end{equation}
    where we use the lower bound from~\eqref{eq.lower-bound-prim} and also the bound $\|\nabla u\|_{L^\infty} \leq C\|\nabla^2 u\|_{L^\infty}$.
    
    Then by interpolation
    \[-\|\nabla^2 b\|_{L^2_x}^2 \leq - \frac{\|\nabla b\|_{L^2_x}^4}{\|b\|_{L^2_x}^2},\]
    and so the final term of~\eqref{eq.proj-upper-prim} is non-positive. Discarding it, we then get
    \[\frac{d}{dt} \frac{\|\nabla b\|_{L^2_x}^2}{\|b\|_{L^2_x}^2} \leq C\|\nabla^2 u\|_{L^\infty_x}\frac{\|\nabla b\|_{L^2_x}^2}{\|b\|_{L^2_x}^2}.\]
    Thus by Gr\"onwall's inequality,
    \begin{equation}
    \label{eq.upper-bound}
        \frac{\|\nabla b(t,\cdot)\|_{L^2_x}^2}{\|b(t,\cdot)\|_{L^2_x}^2} \leq \exp\Big(C\int_0^t \|\nabla^2 u\|_{L^\infty_x}(s)\,ds\Big)\frac{\|\nabla b(0,\cdot)\|_{L^2_x}^2}{\|b(0,\cdot)\|_{L^2_x}^2}.
    \end{equation}
    Combining~\eqref{eq.lower-bound} and~\eqref{eq.upper-bound}, we conclude.
\end{proof}

\section{Proof of Proposition~\ref{prop.is-transitive}}

To prove Proposition~\ref{prop.is-transitive}, we essentially want to just take the Fourier transform of~\eqref{eq.induction-equation} and note that pure Fourier mode advecting flows directly ``couple'' Fourier modes of the magnetic field, allowing us to move mass from any Fourier mode onto a Fourier mode with unit wavenumber. However some additional care needs to be taken. First we need to ensure the advecting flow is $\R^3$-valued, so we cannot take a pure Fourier mode but must rather use two Fourier modes. Second, we need to ensure the $u$ is divergence-free, which constrains the directions $\mu \in \C^3$ we can put on the Fourier mode. Finally, we need to ensure that $u$ is compactly supported in time away from $0$; thus we need to use a smooth time cutoff, which makes it easier to work with the ``mild form'' of the equation. It is for these reasons this rather simple fact becomes the somewhat complicated computation below.

\begin{proof}[Proof of Proposition~\ref{prop.is-transitive}]
    As $b_0 \in L^2_{\mathrm{sol},0}(\T^3)$ and $b_0 \ne 0$, we have that there exists some $j \in \Z^3, j \ne 0$ such that $\hat b_0(j) =: w \ne 0$. By the divergence-free condition, we have that $j \cdot w =0$. Let us assume without loss of generality that $\mathrm{e}_z \cdot w \ne 0$. We can also assume without loss of generality that $j \ne \mathrm{e}_z$, as in that case we can conclude with $u=0.$ Then let $v \in \C^3, |v|=1$ such that $w \cdot v  = \mathrm{e}_z \cdot v =0$ and define
    \[\mu := v - \frac{ v \cdot (\mathrm{e}_z-j)}{|\mathrm{e}_z-j|^2} (\mathrm{e}_z-j).\]
    Fix $\phi \in C_c^\infty(0,1)$ such that $\phi \geq 0$ and $\int \phi(t)\,dt = 1$. Then for any $\ep \in \R$, let
    \[u^\ep(t,r) := \ep \phi(t) \Big(\mu e^{2\pi i (\mathrm{e}_z-j)\cdot r} + \overline{\mu} e^{-2\pi i (\mathrm{e}_z-j) \cdot r}\Big).\]
    One can readily verify that $u^\ep : [0,\infty) \times \T^3 \to \R^3$ and $\nabla \cdot u^\ep = 0$. 

    We then claim that for $\ep>0$ small enough, we have that 
    \[v\cdot\hat \sol^{u^\ep, \kappa}_{0,1}(\mathrm{e}_z,j)w \ne 0.\]
    Before proving this claim, let us quickly see that claim implies the proposition. Note that
    \[\|\partial_t^\ell \partial_x^\alpha u^\ep\|_{L^\infty_{t,x}} \leq \ep \|\partial_t^\ell \phi\|_{L^\infty} (C|j|)^{|\alpha|} \leq \ep C(|j|) \|\partial_t^\ell \phi\|_{L^\infty} e^{|\alpha|^2},\]
    so choosing $\ep(|j|)$ small enough, we can ensure that $v\cdot\hat \sol^{u^\ep, \kappa}_{0,1}(\mathrm{e}_z,j)w \ne 0$ while $u^\ep$ obeys bounds independent of $b_0, \kappa$. Then we note that by Lemma~\ref{lem.translation-fourier},
    \[\int e^{-2\pi i (j - \mathrm{e}_z) \cdot y} \widehat{\sol_{0,1}^{\tau_y u_\ep, \kappa}b_0}(\mathrm{e}_z)\,dy = \sol^{u^\ep, \kappa}_{0,1}(\mathrm{e}_z,j) \hat b_0(j) = \sol^{u^\ep, \kappa}_{0,1}(\mathrm{e}_z,j)w \ne 0.\]
    Thus we can choose some $y \in \T^3$ and let $u := \tau_y u_\ep$ so that 
    \[\widehat{\sol_{0,1}^{u, \kappa}b_0}(\mathrm{e}_z) \ne 0,\]
    as desired. This property persists for $t \geq T:=1$ by the explicit form of the heat equation evolution.

    Thus we just need to show that for $\ep>0$ small enough, we have that 
    \[v\cdot\hat \sol^{u^\ep, \kappa}_{0,1}(\mathrm{e}_z,j)w \ne 0.\]
    This in turn is implied by the claim that
    \[\frac{d}{d\ep}\Big|_{\ep =0} v\cdot\hat \sol^{u^\ep, \kappa}_{0,1}(\mathrm{e}_z,j)w \ne 0.\]
    To see this, we note that $\hat \sol^{u^\ep, \kappa}_{0,1}(\mathrm{e}_z,j)w = \hat\beta ^\ep(1,\mathrm{e}_z)$ where $\beta^\ep(t,x)$ solves
    \[\begin{cases}
        \partial_t \beta^\ep - \kappa \Delta \beta^\ep + u^\ep \cdot \nabla \beta^\ep - \beta^\ep \cdot \nabla u^\ep =0\\
        \beta^\ep(0,\cdot) = w e^{2\pi i j \cdot r}.
    \end{cases}\]
    Then $\beta^\ep$ solves the integral equation
    \[\beta^\ep(t) = e^{\kappa t \Delta} \big( w e^{2\pi i j \cdot r}\big) + \ep\int_0^t e^{\kappa (t-s)\Delta}\Big(\beta^\ep(s) \cdot \nabla u^1(s)-u^1(s) \cdot \nabla \beta^\ep(s)\Big)\,ds.\]
    Thus 
    \[\frac{d}{d\ep}\Big|_{\ep=0} \beta^\ep(1) = e^{\kappa \Delta}\big(w e^{2\pi i j \cdot r}\big) + \int_0^1 e^{\kappa(t-s) \Delta} \Big(\beta^0(s) \cdot \nabla u^1(s) - u^1(s) \cdot\nabla \beta^0(s)\Big)\,ds,\]
    where
    \[\beta^0(t) = e^{\kappa t \Delta} \big(w e^{2\pi i j \cdot r}\big) = e^{- 4\pi^2 |j|^2 \kappa t} w e^{2\pi i j \cdot r}.\]
    Plugging in, we thus see that 
  \[\frac{d}{d\ep}\Big|_{\ep=0} \beta^\ep(1) = e^{- 4\pi^2 |j|^2 \kappa} w e^{2\pi i j \cdot r} + \int_0^1 e^{\kappa(t-s) \Delta} e^{- 4\pi^2 |j|^2 \kappa s}\Big( e^{2\pi i j \cdot r} w\cdot \nabla u^1(s) - u^1(s) \cdot\nabla  e^{2\pi i j \cdot r}w\Big)\,ds.\]
    Thus, computing directly,
    \begin{align*}\frac{d}{d\ep}\Big|_{\ep=0} \hat \beta^\ep(1, \mathrm{e}_z) &= 2\pi i\int_0^1 e^{-4\pi^2\kappa(t-s)}  e^{- 4\pi^2 |j|^2 \kappa s} \phi(s) \,ds 
    \\&\qquad\times\bigg(\int e^{4\pi i (j - \mathrm{e}_z) \cdot r}\Big({-w} \cdot (\mathrm{e}_z - j)\overline{\mu} -  \overline{\mu} \cdot  jw\Big)+ w\cdot (\mathrm{e}_z - j)\mu  -  \mu \cdot  jw\,dr\bigg)
    \\&=K\Big( w\cdot (\mathrm{e}_z - j)\mu  -  \mu \cdot  jw\Big),
      \end{align*}
    for some non-zero constant $K$. Recalling that $w \cdot j = 0,$ $\mu \cdot (\mathrm{e}_z - j) =0$, and $v \cdot w =0$, we get that 
    \[v \cdot \frac{d}{d\ep}\Big|_{\ep=0} \hat \beta^\ep(1, \mathrm{e}_z) = K \Big( w \cdot \mathrm{e}_z v \cdot\mu - \mu \cdot \mathrm{e}_z v \cdot w\Big) = K w \cdot \mathrm{e_z} v \cdot \mu.\]
    Then by assumption $w \cdot \mathrm{e}_z \ne 0$ and by construction $v \cdot \mu = \mu \cdot \mu$, so $v \cdot \frac{d}{d\ep}\Big|_{\ep=0} \hat \beta^\ep(1, \mathrm{e}_z) \ne 0$---and we can conclude---provided $\mu \ne 0$. Then we note that by construction $\mu = 0$ if and only if $(\mathrm{e_z} -j) \cdot w = (\mathrm{e_z} -j) \cdot \mathrm{e_z} =0.$ But since $w \cdot j =0$ and $\mathrm{e_z} \cdot w \ne0$, this is impossible, so $\mu \ne 0$ and we conclude.
\end{proof}

{\small
\bibliographystyle{alpha}
\bibliography{references}

\begin{thebibliography}{HPSRY24}

\bibitem[Arn05]{arnold_arnolds_2005}
Vladimir~I. Arnold.
\newblock {\em Arnold's {Problems}}.
\newblock Springer Berlin Heidelberg, 2005.

\bibitem[BBPS22a]{bedrossian_lagrangian_2022}
Jacob Bedrossian, Alex Blumenthal, and Sam Punshon-Smith.
\newblock Lagrangian chaos and scalar advection in stochastic fluid mechanics.
\newblock {\em Journal of the European Mathematical Society}, 24(6):1893--1990, January 2022.

\bibitem[BBPS22b]{bedrossian_almost-sure_2022}
Jacob Bedrossian, Alex Blumenthal, and Samuel Punshon-Smith.
\newblock Almost-sure exponential mixing of passive scalars by the stochastic {Navier}–{Stokes} equations.
\newblock {\em The Annals of Probability}, 50(1):241--303, January 2022.

\bibitem[BGM24]{bagnara_anomalous_2024}
Marco Bagnara, Francesco Grotto, and Mario Maurelli.
\newblock Anomalous {Regularization} in {Kazantsev}-{Kraichnan} {Model}, November 2024.
\newblock arXiv:2411.09482 [math].

\bibitem[BZG23]{blumenthal_exponential_2023}
Alex Blumenthal, Michele~Coti Zelati, and Rishabh~S. Gvalani.
\newblock Exponential mixing for random dynamical systems and an example of {Pierrehumbert}.
\newblock {\em The Annals of Probability}, 51(4):1559--1601, July 2023.

\bibitem[CG95]{childress_stretch_1995}
Stephen Childress and Andrew Gilbert.
\newblock {\em Stretch, {Twist}, {Fold}: {The} {Fast} {Dynamo}}.
\newblock Springer Berlin, Heidelberg, 1995.

\bibitem[CIS24]{cooperman_harris_2024}
William Cooperman, Gautam Iyer, and Seungjae Son.
\newblock A {Harris} theorem for enhanced dissipation, and an example of {Pierrehumbert}, March 2024.
\newblock arXiv:2403.19858 [math].

\bibitem[CM24]{coghi_existence_2024}
Michele Coghi and Mario Maurelli.
\newblock Existence and uniqueness by {Kraichnan} noise for {2D} {Euler} equations with unbounded vorticity, July 2024.
\newblock arXiv:2308.03216 [math].

\bibitem[Cow33]{cowling_magnetic_1933}
T.~G. Cowling.
\newblock The {Magnetic} {Field} of {Sunspots}.
\newblock {\em Monthly Notices of the Royal Astronomical Society}, 94(1):39--48, November 1933.

\bibitem[CR24]{cooperman_exponential_2024}
William Cooperman and Keefer Rowan.
\newblock Exponential scalar mixing for the {2D} {Navier}-{Stokes} equations with degenerate stochastic forcing, August 2024.
\newblock arXiv:2408.02459 [math].

\bibitem[FV91]{friedlander_dynamo_1991}
Susan Friedlander and Misha~M. Vishik.
\newblock Dynamo theory, vorticity generation, and exponential stretching.
\newblock {\em Chaos: An Interdisciplinary Journal of Nonlinear Science}, 1(2):198--205, August 1991.

\bibitem[GGM24]{galeati_anomalous_2024}
Lucio Galeati, Francesco Grotto, and Mario Maurelli.
\newblock Anomalous {Regularization} in {Kraichnan}'s {Passive} {Scalar} {Model}, July 2024.
\newblock arXiv:2407.16668 [math].

\bibitem[Gil88]{gilbert_fast_1988}
Andrew~D. Gilbert.
\newblock Fast dynamo action in the {Ponomarenko} dynamo.
\newblock {\em Geophysical \& Astrophysical Fluid Dynamics}, 44(1-4):241--258, December 1988.

\bibitem[GV05]{gerard-varet_oscillating_2005}
David Gérard-Varet.
\newblock Oscillating {Solutions} of {Incompressible} {Magnetohydrodynamics} and {Dynamo} {Effect}.
\newblock {\em SIAM Journal on Mathematical Analysis}, 37(3):815--840, January 2005.

\bibitem[GVR07]{gerard-varet_shear_2007}
David Gérard-Varet and Frédéric Rousset.
\newblock Shear layer solutions of incompressible {MHD} and dynamo effect.
\newblock {\em Annales de l'Institut Henri Poincaré C, Analyse non linéaire}, 24(5):677--710, September 2007.

\bibitem[GY21]{gess_stabilization_2021}
Benjamin Gess and Ivan Yaroslavtsev.
\newblock Stabilization by transport noise and enhanced dissipation in the {Kraichnan} model, August 2021.
\newblock arXiv:2104.03949 [math].

\bibitem[HPSRY24]{hairer_lower_2024}
Martin Hairer, Sam Punshon-Smith, Tommaso Rosati, and Jaeyun Yi.
\newblock Lower bounds on the top {Lyapunov} exponent for linear {PDEs} driven by the {2D} stochastic {Navier}-{Stokes} equations, November 2024.
\newblock arXiv:2411.10419 [math].

\bibitem[JR02]{jan_integration_2002}
Yves~Le Jan and Olivier Raimond.
\newblock Integration of {Brownian} vector fields.
\newblock {\em The Annals of Probability}, 30(2):826--873, April 2002.

\bibitem[JR04]{jan_flows_2004}
Yves~Le Jan and Olivier Raimond.
\newblock Flows, coalescence and noise.
\newblock {\em The Annals of Probability}, 32(2):1247--1315, April 2004.

\bibitem[Kaz68]{kazantsev_enhancement_1968}
A.~P. Kazantsev.
\newblock Enhancement of a {Magnetic} {Field} by a {Conducting} {Fluid}.
\newblock {\em Soviet Journal of Experimental and Theoretical Physics}, 26:1031, May 1968.
\newblock ADS Bibcode: 1968JETP...26.1031K.

\bibitem[Kra68]{kraichnanSmallScaleStructure1968}
Robert~H. Kraichnan.
\newblock Small‐{Scale} {Structure} of a {Scalar} {Field} {Convected} by {Turbulence}.
\newblock {\em The Physics of Fluids}, 11(5):945--953, May 1968.

\bibitem[Kra74]{kraichnan_convection_1974}
Robert~H. Kraichnan.
\newblock Convection of a passive scalar by a quasi-uniform random straining field.
\newblock {\em Journal of Fluid Mechanics}, 64(4):737--762, July 1974.

\bibitem[Lar19]{larmor_how_1919}
Joseph Larmor.
\newblock How {Could} a {Rotating} {Body} such as the {Sun} {Become} a {Magnet}?
\newblock {\em Report of the British Association for the Advancement of Science 87th Meeting}, 1919.

\bibitem[LTZ24]{luo_elementary_2024}
Dejun Luo, Bin Tang, and Guohuan Zhao.
\newblock An elementary approach to mixing and dissipation enhancement by transport noise, February 2024.
\newblock arXiv:2402.07484 [math].

\bibitem[MD18]{miles_diffusion-limited_2018}
Christopher~J. Miles and Charles~R. Doering.
\newblock Diffusion-limited mixing by incompressible flows.
\newblock {\em Nonlinearity}, 31(5):2346, April 2018.

\bibitem[NFS25]{navarro-fernandez_exponential_2025}
Víctor Navarro-Fernández and Christian Seis.
\newblock Exponential mixing by random cellular flows, February 2025.
\newblock arXiv:2502.17273 [math].

\bibitem[Ota93]{otani_fast_1993}
Niels~F. Otani.
\newblock A fast kinematic dynamo in two-dimensional time-dependent flows.
\newblock {\em Journal of Fluid Mechanics}, 253:327--340, August 1993.

\bibitem[Pie94]{pierrehumbert_tracer_1994}
Raymond~T Pierrehumbert.
\newblock Tracer microstructure in the large-eddy dominated regime.
\newblock {\em Chaos, Solitons \& Fractals}, 4(6):1091--1110, June 1994.

\bibitem[Poo96]{poon_unique_1996}
Chi-Cheung Poon.
\newblock Unique continuation for parabolic equations.
\newblock {\em Communications in Partial Differential Equations}, 21(3-4):521--539, December 1996.

\bibitem[Rob70]{roberts_spatially_1970}
Gareth~Owen Roberts.
\newblock Spatially periodic dynamos.
\newblock {\em Philosophical Transactions of the Royal Society of London. Series A, Mathematical and Physical Sciences}, 266(1179):535--558, July 1970.

\bibitem[Row24]{rowan_anomalous_2024}
Keefer Rowan.
\newblock On {Anomalous} {Diffusion} in the {Kraichnan} {Model} and {Correlated}-in-{Time} {Variants}.
\newblock {\em Archive for Rational Mechanics and Analysis}, 248(5):93, September 2024.

\bibitem[VZ72]{vainshtein_origin_1972}
S.~I. Vaĭnshteĭn and Ya~B. Zel'dovich.
\newblock Origin of {Magnetic} {Fields} in {Astrophysics} ({Turbulent} "{Dynamo}" {Mechanisms}).
\newblock {\em Soviet Physics Uspekhi}, 15(2):159, February 1972.

\bibitem[Zel56]{zeldovich_magnetic_1956}
Yakov Zeldovich.
\newblock The {Magnetic} {Field} in the {Two}-dimensional {Motion} of a {Conducting} {Turbulent} {Liquid}.
\newblock {\em JETP}, 31, 1956.

\bibitem[ZNF24]{zelati_three-dimensional_2024}
Michele~Coti Zelati and Víctor Navarro-Fernández.
\newblock Three-dimensional exponential mixing and ideal kinematic dynamo with randomized {ABC} flows, July 2024.
\newblock arXiv:2407.18028 [math].

\bibitem[ZRMS84]{zeldovich_kinematic_1984}
Ya~B. Zel'Dovich, A.~A. Ruzmaikin, S.~A. Molchanov, and D.~D. Sokoloff.
\newblock Kinematic dynamo problem in a linear velocity field.
\newblock {\em Journal of Fluid Mechanics}, 144:1--11, July 1984.

\bibitem[ZSV25]{zelati_alpha-unstable_2025}
Michele~Coti Zelati, Massimo Sorella, and David Villringer.
\newblock Alpha-unstable flows and the fast dynamo problem, April 2025.
\newblock arXiv:2504.00855 [math].

\end{thebibliography}
}

\end{document}